\newtheorem{theorem}{Theorem}[section]
\newtheorem{lemma}[theorem]{Lemma}
\newtheorem{corollary}[theorem]{Corollary}
\theoremstyle{definition}
\newtheorem{conjecture}[theorem]{Conjecture}
\theoremstyle{remark}
\newtheorem{remark}[theorem]{Remark}
\numberwithin{equation}{section}
\newcommand{\cF}{\mathcal{F}}
\newcommand{\bN}{\mathbb{N}}
\newcommand{\bZ}{C}
\newcommand{\D}{\mathsf{D}}
\newcommand{\s}{\mathsf{s}}
\newcommand{\auf}{[\![}
\newcommand{\zu}{]\!]}
\newcommand{\reih}[2]{\left(#1 \mid #2\right)}
\newcommand{\interv}[2]{\auf #1 \, , \, #2 \zu}
\def\le{\leqslant}
\def\ge{\geqslant}
\def\leq{\leqslant}
\def\geq{\geqslant}
\begin{document}
\setcounter{page}{1}

\centerline{}

\centerline{}


\title[The Erd\H{o}s-Ginzburg-Ziv constant of rank-two-like $p$-groups]{The Erd\H{o}s-Ginzburg-Ziv constant \\ of rank-two-like $p$-groups}

\author[B. Girard, S. Zotova]{Benjamin Girard $^1$ \MakeLowercase {and} Sofia Zotova $^2$}

\address{$^{1}$ Sorbonne Universit\'e, Universit\'e Paris Diderot, CNRS, Institut de Math\'ematiques de Jussieu - Paris Rive Gauche, IMJ-PRG, F-75005, Paris, France}
\email{\textcolor[rgb]{0.00,0.00,0.84}{benjamin.girard@imj-prg.fr}}

\address{$^{2}$ Mathematisches Institut, Universität Bonn, Bonn, Germany.}
\email{\textcolor[rgb]{0.00,0.00,0.84}{s87szoto@uni-bonn.de}}

\keywords{Additive combinatorics, Baker-Schmidt theorem, Davenport constant, Erd\H{o}s-Ginzburg--Ziv constant, finite abelian group, zero-sum sequence}

\subjclass[2020]{Primary 11B30; Secondary 05E16, 20K01}

\begin{abstract}
Adapting Reiher's proof of Kemnitz's conjecture, we obtain two refinements of a theorem of Schmid and Zhuang. Our main results provide improved upper bounds for the Erd\H{o}s-Ginzburg-Ziv constant of rank-two-like $p$-groups, and their direct products with cyclic groups of order coprime to $p$. In particular, we determine the exact value of this constant, and also confirm a conjecture of Gao, for a new infinite family of groups of arbitrarily large rank.
\end{abstract} 

\maketitle

\section{Introduction}

\medskip
Let $(G,+)$ be a finite abelian group, and let $(\mathcal{F}(G),\cdot)$ be the free abelian monoid on the set $G$. Throughout the paper, the elements of $\mathcal{F}(G)$ will simply be called \em sequences \em over $G$, and for any such element $S=g_1 \cdots g_{\ell}$, the integer $|S|=\ell$ will be called the \em length \em of $S$, and $\sigma(S)=\sum_{i=1}^{\ell} g_i \in G$ the \em sum \em of $S$.   

\medskip
A classical problem in additive combinatorics is the following. 
Given a subset $L$ of $\mathbb{N}=\{1,2,\dots\}$, what is the smallest positive integer $\mathsf{s}_L(G)$, if any exists, so that every sequence $S$ over $G$ of length $|S| \ge \mathsf{s}_L(G)$ contains a subsequence $T \mid S$ so that $\sigma(T)=0$ and $|T| \in L$?

\medskip
Specifying different values for $L$ in the above definition gives rise to a rich family of combinatorial invariants related to factorization theory \cite{GeroldingerAlfred2006NFAC,rusza}, invariant theory \cite{CDG16}, number theory \cite{carmichael}, coding theory \cite{SP11}, graph theory \cite{AFK84} and discrete geometry \cite{edel}. 

\medskip
In the present paper, we mainly focus on the interplay between three of these invariants: the \em Davenport constant \em of $G$, denoted by $\D(G)$ and defined as $\mathsf{s}_{L}(G)$ when $L=\mathbb{N}$, the constant $\eta(G)$ defined as $\mathsf{s}_{L}(G)$ when $L=\{1,\dots,\exp(G)\}$, and the \em Erd\H{o}s-Ginzburg-Ziv constant \em of $G$, denoted by $\mathsf{s}(G)$ and defined as $\mathsf{s}_{L}(G)$ when $L=\{\exp(G)\}$.

\medskip
These invariants have been studied since the early sixties \cite{EGZ,Rogers63} but remain as intriguing as ever. Various bounds and some exact values for these invariants are known that typically depend on the \em invariant factors \em of $G$,
that is to say on the unique sequence of integers $1 < n_1 \mid \cdots \mid n_r \in \mathbb{N}$ for which $G \simeq C_{n_1} \oplus \cdots \oplus C_{n_r}$, where $C_n$ denotes the cyclic group of order $n$. In this context, $r$ will be called the \em rank \em of $G$, and $n_r=\exp(G)$ the \em exponent \em of $G$.

\medskip
It readily follows from the definitions that for every finite abelian group $G$,
\begin{eqnarray} \label{trivialbounds}
2 \exp(G)-1 \le \D(G) + \exp(G)-1 \le \eta(G) + \exp(G)-1 \le \s(G).
\end{eqnarray}
It is also easy to prove that the first and second inequalities are strict unless $G$ is cyclic, and it was conjectured by Gao (see Conjecture $6.5$ in \cite{GaoGero06}) that the third one always holds as an equality.

\begin{conjecture} \label{ConjGao} For every finite abelian group $G$, one has
$$\s(G) = \eta(G) + \exp(G)-1.$$
\end{conjecture}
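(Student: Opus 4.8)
The chain of inequalities \eqref{trivialbounds} already gives $\s(G) \ge \eta(G) + \exp(G) - 1$ for every finite abelian group $G$, so the entire content of Conjecture~\ref{ConjGao} lies in the reverse inequality. Writing $n = \exp(G)$, my plan is therefore to show that every sequence $S$ over $G$ with $|S| = \eta(G) + n - 1$ contains a subsequence $T \mid S$ with $\sigma(T) = 0$ and $|T| = n$ exactly. The guiding principle is that a sequence of length $\eta(G)$ is, by definition, guaranteed a zero-sum subsequence of length in $\{1,\dots,n\}$; the difficulty is never the \emph{existence} of zero-sum blocks but the control of their \emph{length}, which is precisely the gap between $\eta$ and $\s$.

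The first step is to reduce to $p$-groups. Decomposing $G \simeq \bigoplus_p G_p$ into its Sylow components, whose exponents $\exp(G_p)$ are pairwise coprime, I would prove a combination lemma: if Conjecture~\ref{ConjGao} holds for two groups $A$ and $B$ with $\gcd(\exp A, \exp B) = 1$, then it holds for $A \oplus B$. Given a sequence over $A \oplus B$ of the critical length, the Chinese Remainder Theorem lets one treat the two coordinates almost independently --- first extracting, in the $B$-component, zero-sum blocks of length $\exp(B)$ whose aggregate $A$-sums form a new auxiliary sequence, then applying the hypothesis for $A$ to assemble a zero-sum of total length $\exp(A)\exp(B) = \exp(A \oplus B)$. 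Making this rigorous requires the sharp product relations for $\eta$ and $\s$ under coprime direct sums, and verifying that the lengths multiply as intended; this bookkeeping is delicate but, in the coprime setting, should not be the principal obstruction.

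It then remains to treat an arbitrary $p$-group, which I would attack by induction on its rank $r$, adapting Reiher's proof of Kemnitz's conjecture (the base cases $r \le 2$ being covered by that theorem and its refinements). For the general step, encode a candidate length-$n$ zero-sum subsequence of $S$ by indicator variables $x_i \in \{0,1\}$, one for each term $g_i$, and translate the two requirements --- $\sum_i x_i g_i = 0$ in $G$ and $\sum_i x_i = n$ --- into a system of polynomial congruences modulo $p$ (lifting through the $p$-power structure of $G$). A Chevalley--Warning / Ax--Katz count, in the quantitative Baker--Schmidt form, then forces the number of solutions to be divisible by $p$, and a pigeonhole argument on the length distribution, in the spirit of Reiher's, would upgrade a solution of length divisible by $n$ to one of length exactly $n$.

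The main obstacle is exactly this last upgrade in high rank. The polynomial method naturally controls the length of a zero-sum block only modulo $p$ (or, with more work, modulo a prescribed $p$-power), whereas the conjecture demands length precisely $n$. For $r \le 2$ the number of variables exceeds the combined degree of the sum-conditions by enough that Reiher's counting pins the length down; but as $r$ grows, the $r$ simultaneous coordinate equations consume this surplus, the Ax--Katz bound weakens, and no elementary refinement is known to recover the exact length. Closing this rank-dependent gap --- producing a genuinely rank-uniform version of the counting step --- is where any complete proof must contribute a new idea, and is the reason the conjecture remains open beyond the rank-two and rank-two-like cases that this paper resolves.
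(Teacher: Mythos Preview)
The statement you are attempting is Conjecture~\ref{ConjGao}, and the paper does \emph{not} prove it: it is presented there as an open conjecture of Gao, and the paper's contribution is to verify it only for certain rank-two-like $p$-groups and their coprime cyclic extensions (Corollaries~\ref{cor:sbestimmen} and~\ref{cor:pgpe+cycl}). There is therefore no ``paper's own proof'' against which to compare your proposal.

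Your write-up is, in fact, not a proof either, and you say so yourself: the final paragraph concedes that the crucial length-exactness step---upgrading a zero-sum of length $\equiv 0 \pmod{n}$ to one of length exactly $n$---fails once the rank exceeds two, because the degree count in the Baker--Schmidt/Chevalley--Warning argument no longer leaves enough slack. That diagnosis is accurate and is precisely why the conjecture is open; the paper's Theorems~\ref{thm:thmgal} and~\ref{thm:thmc} push this polynomial-method approach as far as current techniques allow, and still fall short of the full conjecture even for $p$-groups. I would also caution that your first reduction step is less routine than you suggest: the ``sharp product relations'' for $\eta$ and $\s$ under coprime direct sums that you would need to glue the Sylow components back together are themselves not known in the generality required, so the reduction to $p$-groups is not a free move. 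In short, what you have written is a reasonable sketch of why the problem is hard, not a proof, and the paper offers no proof to compare it with.
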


\medskip
In the special case of groups of rank at most two, that is to say of the form $C_m \oplus C_n$, where $1 \le m \mid n$, the invariants above are well understood, and Conjecture \ref{ConjGao} holds. 

\begin{theorem}\label{ranktwo}
Let $G \simeq C_m \oplus C_n$, where $1 \le m \mid n$ are two integers.
Then 
$$\mathsf{D}(G)=m+n-1, 
\ \ \eta(G)=2m+n-2 
\ \ \text{ and } \ \ 
\mathsf{s}(G)=2m+2n-3.$$
\end{theorem}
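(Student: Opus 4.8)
The plan is to read off the three lower bounds from explicit extremal sequences, to quote Olson's classical theorem for the Davenport constant, and to prove the bound on $\s(G)$ by induction on $|G|$ with Reiher's theorem on $C_p^2$ as the engine; the value of $\eta(G)$ will then follow formally. Write $G=\langle e_1\rangle\oplus\langle e_2\rangle$ with $e_1$ of order $m$ and $e_2$ of order $n$, and let $g^k$ denote the sequence consisting of $k$ copies of $g\in G$. The sequence $e_1^{m-1}\cdot e_2^{n-1}$ has length $m+n-2$ and no non-empty zero-sum subsequence, whence $\D(G)\geq m+n-1$. Likewise, $S:=e_1^{m-1}\cdot e_2^{n-1}\cdot(e_1+e_2)^{m-1}$ has length $2m+n-3$ and no zero-sum subsequence of length in $\{1,\dots,n\}$: a subsequence using $\alpha$ copies of $e_1$, $\beta$ of $e_2$ and $\gamma$ of $e_1+e_2$ has sum $0$ exactly when $m\mid\alpha+\gamma$ and $n\mid\beta+\gamma$, and since $0\leq\alpha+\gamma\leq 2m-2$ and $0\leq\beta+\gamma\leq m+n-2\leq 2n-2$, such a non-empty subsequence must satisfy $\alpha+\gamma=m$, $\beta+\gamma=n$ and $1\leq\gamma\leq m-1$, hence has length $m+n-\gamma\geq n+1$. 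Therefore $\eta(G)\geq 2m+n-2$, and feeding this into \eqref{trivialbounds} yields $\s(G)\geq\eta(G)+\exp(G)-1\geq 2m+2n-3$.

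For the upper bounds I would first quote Olson's theorem on the Davenport constant of groups of rank two, which gives $\D(G)\leq m+n-1$ (the $p$-group case following from the nilpotency of the augmentation ideal of $\mathbb F_p[G]$, the general rank-two case being Olson's separate combinatorial argument); this settles the first assertion. It then suffices to prove $\s(G)\leq 2m+2n-3$, for the last inequality of \eqref{trivialbounds} gives $\eta(G)\leq\s(G)-\exp(G)+1\leq 2m+n-2$, which matches the lower bound above, so the second and third assertions follow. I would prove $\s(G)\leq 2m+2n-3$ by induction on $|G|=mn$. If $m=1$, then $G=C_n$ is cyclic and the bound is the \EGZ\ theorem $\s(C_n)=2n-1$. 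If $m\geq 2$, choose a prime $p\mid m$; then $p\mid n$ as well, the subgroup $N:=pG\simeq C_{m/p}\oplus C_{n/p}$ satisfies $m/p\mid n/p$ and $|N|<|G|$, and $G/N\simeq C_p^2$ with $\exp(G/N)=p$ and $\exp(N)=n/p$.

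Now let $T$ be an arbitrary sequence over $G$ with $|T|=2m+2n-3$. Extract from $T$ disjoint subsequences $T_1,T_2,\dots$, each of length $p$ and with $\sigma(T_i)\in N$; each such extraction is possible as long as the part of $T$ not yet used, read modulo $N$ in $C_p^2$, has length at least $\s(C_p^2)=4p-3$, which is Reiher's theorem. Since $\s(N)=2(m/p)+2(n/p)-3$ by the induction hypothesis, a direct count shows that after $\s(N)-1$ extractions exactly $(2m+2n-3)-(\s(N)-1)p=4p-3$ elements of $T$ remain unused, so the extraction can be iterated at least $\s(N)$ times, producing $T_1,\dots,T_{\s(N)}$. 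The sums $\sigma(T_1),\dots,\sigma(T_{\s(N)})$ form a sequence of length $\s(N)$ over $N$, hence contain a zero-sum subsequence indexed by a set of size $\exp(N)=n/p$; concatenating the corresponding $T_i$'s gives a zero-sum subsequence of $T$ of length $(n/p)\cdot p=n=\exp(G)$.

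Thus $\s(G)\leq 2m+2n-3$, which closes the induction and, together with the lower bounds, also forces $\s(G)=\eta(G)+\exp(G)-1$, confirming Conjecture~\ref{ConjGao} for groups of rank at most two. The serious ingredients are all external — Olson's rank-two Davenport theorem, the \EGZ\ theorem, and above all Reiher's resolution of Kemnitz's conjecture for $C_p^2$, the very result this paper sets out to adapt. Granting these, the main point requiring care is the choice of reducing subgroup: it must be $N=pG$ for a prime $p\mid m$, so that simultaneously $G/N\simeq C_p^2$ and $N$ is again a group of rank at most two with $m/p\mid n/p$; with other natural choices of reducing subgroup (for instance a Sylow subgroup) the extraction count fails to close up.
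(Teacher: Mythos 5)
Your proposal is correct and is essentially the standard proof that the paper itself invokes for Theorem~\ref{ranktwo} (which it states without proof): lower bounds from the extremal sequences $e_1^{m-1}e_2^{n-1}$ and $e_1^{m-1}e_2^{n-1}(e_1+e_2)^{m-1}$, Olson's theorem for $\D(G)$, and the induction via $N=pG$ with $G/N\simeq C_p^2$ using Reiher's theorem, exactly as in the references cited there (Olson, \EGZ, Reiher, and Theorem~5.8.3 of Geroldinger--Halter-Koch). The extraction count $(2m+2n-3)-(\s(N)-1)p=4p-3$ closes up as you claim, so no gap.
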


\medskip
The values of $\D(G)$ and $\eta(G)$ are folklore when $G$ is cyclic, that is when $m=1$. When $m > 1$, the value of $\D(G)$ was obtained by Olson \cite{OLSON2} as well as the one for $\eta(G)$ when $m=n$ is prime. An easy induction then yields the value of $\eta(G)$ for all $1 < m \mid n$ (see Theorem  $5.8.3$ in \cite{GeroldingerAlfred2006NFAC}). Concerning $\s(G)$, the result is already non-trivial in the cyclic case, and was proved by Erd\H{o}s, Ginzburg and Ziv in $1961$ \cite{EGZ}. The exact value of $\s(G)$ when $m=n$ is prime was only determined in $2003$, by Reiher \cite{Kemnitz} and di Fiore independently, thereby solving a conjecture made by Kemnitz $20$ years earlier \cite{kemnitz1983lattice}. Lifting this result to all $1 < m \mid n$ then also follows from an easy induction (see Theorem $5.8.3$ in \cite{GeroldingerAlfred2006NFAC}). For the sake of completeness, let us recall that Savchev and Chen later obtained a refinement of Reiher's theorem \cite{savchev2005kemnitz}. 

\medskip
In the case of groups of rank at least three, far less is known and the picture, already in the special case of finite abelian $p$-groups, shows a lot more contrast. 

\medskip 
On the one hand, the exact value of $\D(G)$ was determined in $1969$ for all $p$-groups by Olson \cite{OLSON1} and Kruyswijk \cite{EmdeBoas69} independently.

\begin{theorem}
\label{Olson1}
    Let $G \simeq \bZ_{p^{a_1}}\oplus\dots\oplus\bZ_{p^{a_r}}$, where $p$ is prime and $a_1,\dots,a_r$ are positive integers, be a finite abelian $p$-group. Then,
    \begin{align}
        \D(G)=\sum_{i=1}^r(p^{a_i}-1) + 1\,.
    \end{align}
\end{theorem}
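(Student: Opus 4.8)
The plan is to prove the two inequalities contained in $\D(G)=\sum_{i=1}^r(p^{a_i}-1)+1$ separately, the upper bound via the group-algebra method of Olson and Kruyswijk. Write $d=\sum_{i=1}^r(p^{a_i}-1)+1$ and fix generators $e_1,\dots,e_r$ of $G$ with $\mathrm{ord}(e_i)=p^{a_i}$. For the lower bound $\D(G)\ge d$, I would exhibit the sequence $S_0$ over $G$ consisting of $p^{a_i}-1$ copies of $e_i$ for each $i$; then $|S_0|=d-1$, and any subsequence $T\mid S_0$ uses $e_i$ with some multiplicity $m_i\in\{0,\dots,p^{a_i}-1\}$, so $\sigma(T)=\sum_i m_ie_i=0$ forces $p^{a_i}\mid m_i$, hence $m_i=0$, for every $i$; thus $S_0$ has no nonempty zero-sum subsequence.

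For the upper bound $\D(G)\le d$, I would work in the group algebra $R=\mathbb{F}_p[G]$, writing $X^g$ for the canonical basis vector attached to $g\in G$, and in the augmentation ideal $I=\ker\bigl(\varepsilon\colon R\to\mathbb{F}_p\bigr)$, where $\varepsilon(X^g)=1$. Since $x^{p^a}-1=(x-1)^{p^a}$ in characteristic $p$, one has $\mathbb{F}_p[C_{p^a}]\cong \mathbb{F}_p[Y]/(Y^{p^a})$, and tensoring these isomorphisms for $i=1,\dots,r$ yields
\[
R\;\cong\;\mathbb{F}_p[Y_1,\dots,Y_r]\big/\bigl(Y_1^{p^{a_1}},\dots,Y_r^{p^{a_r}}\bigr),
\]
an isomorphism under which $X^{e_i}\mapsto 1+Y_i$ and $I$ corresponds to the ideal $(Y_1,\dots,Y_r)$. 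The crucial consequence is $I^{d}=0$: as an $\mathbb{F}_p$-vector space $I^d$ is spanned by the images of the monomials $Y_1^{b_1}\cdots Y_r^{b_r}$ with $b_1+\dots+b_r\ge d$, and since $d=\sum_i(p^{a_i}-1)+1$, any such exponent vector satisfies $b_i\ge p^{a_i}$ for at least one $i$, so the monomial dies in $R$.

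Now let $S=g_1\cdots g_\ell$ be a sequence over $G$ with $\ell\ge d$, and set $P=\prod_{i=1}^\ell(X^{g_i}-1)\in R$. Each factor lies in $I$ because $\varepsilon(X^{g_i}-1)=0$, hence $P\in I^{\ell}\subseteq I^{d}=0$. On the other hand, expanding the product by choosing from each factor either $X^{g_i}$ or $-1$ gives $P=\sum_{T\mid S}(-1)^{\ell-|T|}X^{\sigma(T)}$, the sum ranging over all subsequences of $S$; collecting the terms with $\sigma(T)=0$, the coefficient of $X^0$ in $P$ equals $\sum_{T\mid S,\ \sigma(T)=0}(-1)^{\ell-|T|}$. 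Were $S$ free of nonempty zero-sum subsequences, only the empty subsequence would contribute and this coefficient would be $(-1)^\ell\neq 0$ in $\mathbb{F}_p$, contradicting $P=0$. Hence $S$ admits a nonempty zero-sum subsequence, so $\D(G)\le d$, and together with the lower bound this proves the theorem.

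The elementary lower bound and the expansion of $P$ are pure bookkeeping; the one point requiring real input is the structural identity $I^{d}=0$ with precisely this exponent, which I expect to be the crux of the argument. Even so, once the explicit presentation of $\mathbb{F}_p[G]$ above is in hand, $I^d=0$ reduces to the pigeonhole observation that $d$ exceeds $\sum_i(p^{a_i}-1)$.
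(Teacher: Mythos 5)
Your proof is correct. The paper does not prove this theorem; it is quoted as a known result of Olson and Kruyswijk, and your argument --- the extremal sequence $\prod_i e_i^{p^{a_i}-1}$ for the lower bound, and the group-algebra computation $\prod_{i}(X^{g_i}-1)\in I^{\ell}\subseteq I^{d}=0$ via the presentation $\mathbb{F}_p[G]\cong\mathbb{F}_p[Y_1,\dots,Y_r]/(Y_1^{p^{a_1}},\dots,Y_r^{p^{a_r}})$ for the upper bound --- is exactly the classical proof from those references, with all steps (in particular the pigeonhole argument giving $I^d=0$) carried out correctly.
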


On the other hand, and as far as groups of rank at least three are concerned, the exact values of $\eta(G)$ and $\s(G)$ are known only for very special types of $p$-groups, such as a) those of the form $C^r_p$ where $p=3$ and $r \in \{3,4,5,6\}$ or $p=5$ and $r=3$ \cite{edel}, and b) homocyclic $2$-groups (see Satz $1$ in \cite{harborth1973extremalproblem}) and closely related ones (see Corollary $4.4$ in \cite{edel}).

\medskip
In general, it is rather easy to see (Lemma $3.2$ in \cite{edel}) that when $G$ is a finite abelian $p$-group, the second inequality in (\ref{trivialbounds}) can be improved to 
\begin{eqnarray}\label{boundetap}
2\D(G)-\exp(G) \le \eta(G),
\end{eqnarray}
which, by the third inequality in (\ref{trivialbounds}), leads to 
\begin{eqnarray}\label{boundsp}
2\D(G)-1 \le \s(G).
\end{eqnarray}

In $2010$, Schmid and Zhuang \cite{zhuang} conjectured that the inequality (\ref{boundsp}), and hence the inequality (\ref{boundetap}), are in fact equalities for all finite abelian $p$-groups such that $\D(G) \le 2\exp(G)-1$. Such groups will be called \em rank-two-like \em $p$-groups, since it follows easily from Theorem \ref{ranktwo} that any group $G$ of rank at most two satisfies $\D(G) \le 2\exp(G)-1$ indeed. Also, note that rank-two-like $p$-groups can have an arbitrarily large rank, and thus form an interesting class of groups to investigate.

\medskip
In support of their conjecture, Schmid and Zhuang obtained the following result (see Theorem $1.2$ in \cite{zhuang}).

\begin{theorem}\label{SchmidZhuang}
 Let $p \ge 3$ be a prime. If $G$ is a finite abelian $p$-group such that $\D(G)\le 2\exp(G)-1$, then
 $$\s(G)\le\D(G)+2\exp(G)-2.$$
\end{theorem}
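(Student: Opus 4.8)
The plan is to adapt Reiher's proof of Kemnitz's conjecture, feeding it through Olson's group-algebra computation (Theorem~\ref{Olson1}) in place of the Chevalley--Warning theorem. Write $q=\exp(G)$ and $d=\D(G)$. By the definition of $\s$ it suffices to show that every sequence $S$ over $G$ with $|S|=d+2q-2$ has a subsequence $T\mid S$ with $\sigma(T)=0$ and $|T|=q$, and we record that $|S|\le 4q-3<4q$ since $d\le 2q-1$. If $G$ is cyclic this is the Erd\H{o}s--Ginzburg--Ziv theorem, and if $G\cong C_m\oplus C_n$ with $m\mid n$ it follows from Theorem~\ref{ranktwo}, which gives $\s(G)=2m+2n-3$ while $d+2q-2=(m+n-1)+2n-2$, and $2m+2n-3\le m+3n-3$ because $m\le n$. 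So we may assume $\operatorname{rank}(G)\ge 3$, in which case Theorem~\ref{Olson1} forces $d\ge q+2$.

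The counting device is as follows. Set $G'=G\oplus C_q$, a finite abelian $p$-group with $\D(G')=d+q-1$ by Theorem~\ref{Olson1}, and work in the group algebra $\mathbb{F}_p[G']$. Its augmentation ideal $\mathfrak{m}$ is nilpotent with $\mathfrak{m}^{\D(G')}=0$: in a presentation $\mathbb{F}_p[G']\cong\mathbb{F}_p[x_1,\dots,x_s]/(x_1^{p^{b_1}},\dots,x_s^{p^{b_s}})$ with $\mathfrak{m}=(x_1,\dots,x_s)$, every nonzero monomial has degree at most $\sum_i(p^{b_i}-1)=\D(G')-1$. Fix a generator $[1]$ of the $C_q$-summand of $G'$; then $1-[g][1]\in\mathfrak{m}$ for every $g\in G'$, so for every subsequence $W=g_1\cdots g_{|W|}$ of $S$ with $|W|\ge d+q-1$ the product $\prod_{i=1}^{|W|}\bigl(1-[g_i][1]\bigr)$ lies in $\mathfrak{m}^{|W|}=0$. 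Reading off the coefficient of the identity of $G'$ yields
\[
\sum_{\substack{T\mid W\\ \sigma(T)=0,\ q\mid|T|}}(-1)^{|T|}\equiv 0\pmod p .
\]
Since $q$ is odd, $(-1)^{|T|}=(-1)^{|T|/q}$ for the terms of this sum, and since $|W|<4q$ only lengths $0,q,2q,3q$ occur; writing $N_k^{W}$ for the number of zero-sum subsequences of $W$ of length $kq$, the congruence becomes $1-N_1^{W}+N_2^{W}-N_3^{W}\equiv 0\pmod p$, with $N_3^{W}=0$ whenever $|W|<3q$.

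Suppose now, for contradiction, that $S$ has no zero-sum subsequence of length $q$, so $N_1^{W}=0$ for every $W$. Then any subsequence $W\mid S$ with $|W|=d+q-1$ has $|W|\in[2q,\,3q-2]$ (using $q+2\le d\le 2q-1$), hence $N_3^{W}=0$, whence $1+N_2^{W}\equiv 0\pmod p$ and so $N_2^{W}\ge p-1$: \emph{every} such window carries at least $p-1$ distinct zero-sum subsequences of length $2q$. Any two of them, $T$ and $T'$ with common part of length $a$, satisfy $\sigma(T\setminus T')=\sigma(T'\setminus T)=-\sigma(T\cap T')$, and since $|T\cup T'|\le|W|\le 3q-2$ their overlap is forced to be large, $a\ge q+2$; hence $T\cap T'$ has length in $[q+2,2q-1]$ while the disjoint blocks $T\setminus T'$ and $T'\setminus T$ have equal sum and common length $2q-a\le q-2$.

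This sets up the combinatorial core of the proof, which is where Reiher's idea is indispensable and where I expect the real difficulty to lie: one must play off the abundance of length-$2q$ zero-sum subsequences in every small window against the relations $1-N_1^{W}+N_2^{W}-N_3^{W}\equiv 0\pmod p$, valid for windows $W$ of \emph{every} size between $d+q-1$ and $d+2q-2<4q$, so as to manufacture a zero-sum subsequence of length exactly $q$ and contradict $N_1^{S}=0$ --- precisely the step Reiher carries out for $G=C_p^{2}$. The hypothesis $\D(G)\le 2\exp(G)-1$ is exactly what keeps $|S|$ below $4q$ and thereby confines the whole argument to the length spectrum $\{q,2q,3q\}$, keeping the extremal families simultaneously numerous and geometrically cramped; for general $p$-groups this spectrum is unbounded, which is consistent with the statement failing there (e.g.\ $\s(C_3^3)>\D(C_3^3)+2\exp(C_3^3)-2$).
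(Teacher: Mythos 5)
Your setup is essentially correct and, modulo packaging, coincides with the paper's: the group-algebra computation in $\mathbb{F}_p[G\oplus C_q]$ produces exactly the congruences that the paper extracts from the Baker--Schmidt theorem (Lemma \ref{lm:lmdebase} and Corollary \ref{BSlemma} with $\gamma=\beta=0$), namely $1-\reih{q}{W}+\reih{2q}{W}-\reih{3q}{W}\equiv 0\pmod p$ for every window $W$ with $\D(G)+q-1\le|W|$, and hence $\reih{2q}{W}\equiv-1\pmod p$ for windows of length $\D(G)+q-1$ once $\reih{q}{S}=0$ is assumed. The reduction to rank at least $3$, the bound $|S|<4q$, and the restriction of the length spectrum to $\{0,q,2q,3q\}$ are all fine.

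The problem is that the argument stops exactly where the theorem gets proved. You write that one must now ``play off the abundance of length-$2q$ zero-sum subsequences'' against the window congruences to manufacture a zero-sum subsequence of length $q$, and you flag this as where the real difficulty lies --- but you never carry it out, and the overlap estimate $a\ge q+2$ you derive is not used anywhere and is not how Reiher's method actually proceeds. The missing content is the bulk of Section 4 of the paper (specialized to $p^k=1$): (i) Lemma \ref{Dubiner}, which shows $\reih{3q}{S}=0$, since a zero-sum subsequence of length $3q$ would itself contain one of length $q$; (ii) Lemma \ref{ABC}, the transplanted Reiher double count of factorizations $J=ABC$ with $|A|=q-1$, $|C|=2q$, $\sigma(A)=\sigma(C)=0$, counted once through $A$ and once through $B$ via Corollary \ref{cor}, yielding $\reih{q-1}{S}\equiv\reih{3q-1}{S}\pmod p$; (iii) Lemmas \ref{binom} and \ref{binom_pte_dav}, which sum the window congruence over all subsequences of length $|S|-q$ and evaluate the resulting binomial coefficients $\binom{|S|}{q}$, $\binom{|S|-q+i}{q}$, $\binom{|S|-2q+i}{q}$ modulo $p$ via Lucas' theorem; and (iv) the final linear combination of these congruences together with Corollary \ref{BSlemma} c) and d), which collapses to $2\equiv 0\pmod p$ and contradicts $p\ge 3$. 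None of these steps is routine, and without them what you have is a correct plan and a correct first lemma, not a proof.
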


Note that if $\D(G)=2\exp(G)-1$, Theorem \ref{SchmidZhuang} gives equality in (\ref{boundsp}), so that the exact values of $\eta(G)$ and $\s(G)$ follow and satisfy Conjecture \ref{ConjGao} indeed. Therefore, and since $\D(C_p \oplus C_p)=2p-1$, Theorem \ref{SchmidZhuang} can be seen as an extension of Reiher's theorem on the Erd\H{o}s-Ginzburg-Ziv constant of rank-two groups of the form $C_p \oplus C_p$ when $p \ge 3$ is prime.

\medskip
Subsequently, Luo could prove that, as conjectured by Schmid and Zhuang, there is equality in (\ref{boundetap}) for all rank-two-like $p$-groups (see Theorem $1.6$ in \cite{luo}).

\begin{theorem}\label{thm:Luo}
    Let $p$ be a prime. If $G$ is a finite abelian $p$-group such that $\D(G)\le 2\exp(G)-1$, then 
    $$\eta(G)=2\D(G)-\exp(G).$$
\end{theorem}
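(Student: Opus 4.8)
Since $\eta(G)\ge 2\D(G)-\exp(G)$ is precisely the inequality (\ref{boundetap}), the whole content of the theorem is the reverse bound $\eta(G)\le 2\D(G)-\exp(G)$. Write $\exp(G)=p^a$ and $G\simeq H\oplus C_{p^a}$; by Theorem \ref{Olson1} one has $\D(G)=\D(H)+p^a-1$, so the hypothesis $\D(G)\le 2p^a-1$ is the same as $\D(H)\le p^a$, and the target bound rewrites as $\eta(G)\le 2\D(H)+p^a-2$. The plan is to prove this by induction on the rank $r$ of $G$. For $r\le 2$ there is nothing to prove: this is Theorem \ref{ranktwo} (together with the trivial case $\eta(C_{p^a})=p^a$), which settles the base of the induction for every prime $p$, including $p=2$.

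Now suppose $r\ge 3$ and decompose $G=G'\oplus C_d$, where $C_d$ is a cyclic invariant factor of smallest order; then $d\mid\exp(G')=\exp(G)$ and, by Theorem \ref{Olson1} again, $\D(G')=\D(G)-(d-1)$. Hence $G'$ is still rank-two-like — in fact $\D(G')\le 2\exp(G')-d$ — and has rank $r-1\ge 2$, so the induction hypothesis gives $\eta(G')=2\D(G')-\exp(G)$. Since $\eta(G')+2(d-1)=2\D(G')-\exp(G)+2(d-1)=2\D(G)-\exp(G)$, the theorem reduces to the single inequality
\[
\eta(G'\oplus C_d)\ \le\ \eta(G')+2(d-1).
\]
To attack this, let $S$ be a sequence over $G=G'\oplus C_d$ of length $\eta(G')+2(d-1)$ and let $\pi\colon G\to G'$ be the coordinate projection. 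Since $|S|\ge\eta(G')$, there is a nonempty $T\mid S$ with $\sigma(\pi(T))=0$ and $|T|\le\exp(G')=\exp(G)$; if $\sigma(T)=0$ we are done, and otherwise $\sigma(T)$ is a nonzero element of the cyclic subgroup $\{0\}\oplus C_d$. The strategy is then to extract a family of such blocks $T_1,T_2,\dots$, each a \emph{minimal} zero-sum subsequence over $G'$ (so $|T_i|\le\D(G')$) together with its value $\sigma(T_i)\in C_d$, and to recombine an appropriate sub-collection using $\D(C_d)=\eta(C_d)=d$, so that the union is a genuine zero-sum subsequence of $S$ of length $\le\exp(G)$. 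A degenerate case in which many terms of $S$ already lie in the $C_d$-direction is disposed of at once (a zero-sum of length $\le d\le\exp(G)$ is then immediate), and in the remaining case one carries both pieces of data — the lengths of the $T_i$, bounded through $\D(G')$, and their sums in $C_d$ — through the recombination.

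The main obstacle is exactly this length bookkeeping. Recombining $\le d$ blocks of length $\le\D(G')$ each only yields a zero-sum subsequence of length up to $d\cdot\D(G')$, and even a careful naive count loses an additive $\exp(G)-1$, i.e. it gives merely $\eta(G)\le\eta(G')+2(d-1)+\exp(G)-1$. Closing this gap is where the rank-two-like hypothesis $\D(G)\le 2\exp(G)-1$ — equivalently the sharp bound $\D(G')\le 2\exp(G)-d$ on the block lengths — has to be used in full strength, presumably together with a weighted, or $\bmod\ p$, counting argument in the spirit of Reiher's proof of Kemnitz's conjecture. Alternatively, one may attempt a direct proof via Olson's group-algebra method for the Davenport constant applied to $G\oplus C_{\exp(G)}$ (or via Chevalley--Warning): this easily produces a nonempty zero-sum subsequence whose length is a positive multiple of $\exp(G)$, after which the Reiher-type difficulty becomes that of excluding the larger multiples — once more using $\D(G)\le 2\exp(G)-1$ to cap how long such a subsequence can be — so as to land on length exactly $\exp(G)$.
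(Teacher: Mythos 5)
First, a point of reference: the paper does not actually prove Theorem \ref{thm:Luo}; it quotes it from \cite{luo}, and in Section \ref{BS&co} it only re-derives one ingredient of Luo's argument, namely the identity $\s_{\interv{j}{n}+\bN}(G)=\D(G)+j-1$. So your attempt must be measured against Luo's proof, whose engine is a mod-$p$ count of zero-sum subsequences of prescribed lengths (Olson's group-ring method, equivalently Corollary \ref{BSlemma} here) applied to the whole group $G$, combined with a structural step that converts a zero-sum subsequence of length in $\interv{n+1}{2n}$ into one of length at most $n$ under the hypothesis $\D(G)\le 2n-1$.

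Your proposal is not a proof: it stops exactly at the decisive step. The lower bound and the arithmetic of the reduction to $\eta(G'\oplus C_d)\le\eta(G')+2(d-1)$ are correct, but that inequality \emph{is} the theorem, and you leave it open (``presumably together with a weighted, or $\bmod\ p$, counting argument''). Moreover, the route you sketch cannot be repaired by bookkeeping: extracting up to $d$ disjoint blocks $T_i$ that are zero-sum over $G'$ with $|T_i|\le\D(G')$ and recombining them over $C_d$ yields a zero-sum subsequence of length up to $d\cdot\D(G')$, a multiplicative rather than additive loss; this is precisely the obstruction that leaves $\eta$ unknown for general $p$-groups, and peeling off the smallest invariant factor does not remove it. Your alternative via $\D(G\oplus C_{\exp(G)})$ is also off target: a sequence of length $2\D(G)-n$ is in general \emph{shorter} than $\D(G)+n-1$ (since $\D(G)\le 2n-1$), so that argument does not even launch, and what is actually needed is not a zero-sum of length divisible by $n$ but one of length in the residue window $\interv{j}{n}$ modulo $n$ with $j=\D(G)-n+1$ --- exactly the corollary closing Section \ref{BS&co} --- followed by Luo's structure theorem (Theorem 5.2 of \cite{luo}; compare the conjecture in Section \ref{conclu}) to shorten a zero-sum of length in $\interv{n+1}{2n}$. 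Those two ingredients, not a rank induction, are what carry the proof.
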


Note that while Theorem \ref{SchmidZhuang} applies to odd primes only, Theorem \ref{thm:Luo} holds when $p=2$ also. 
  
\section{New results and plan of the paper}

In this paper, we refine Theorem \ref{SchmidZhuang} in two ways. We do so by adapting the original argument used by Reiher \cite{Kemnitz} to prove Kemnitz's conjecture, one of the main changes here being the use of Baker-Schmidt theorem (see Theorem $2$ in \cite{BS}) in place of Chevalley-Warning theorem. 

\medskip
Our first main result is the following.

\begin{theorem}\label{thm:thmgal}
    Let $p \ge 3$ be a prime. If $G$ is a finite abelian $p$-group such that $\D(G)\le 2\exp(G)-p^k$ for some prime power $1 \le p^{k} \le \exp(G)$, then 
    $$\s(G)\leq\D(G)+2\exp(G)-p^k-1.$$
\end{theorem}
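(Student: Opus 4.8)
The plan is to adapt, as closely as the situation allows, Reiher's proof of Kemnitz's conjecture \cite{Kemnitz} --- the very route by which Schmid and Zhuang established Theorem~\ref{SchmidZhuang}, which is exactly the case $p^k=1$ of the present statement --- while carrying the parameter $p^k$ through all of the estimates. Write $n=\exp(G)$, a power of $p$, and $d=\D(G)$, so that the hypothesis reads $d\le 2n-p^k$; it suffices to show that any sequence $S$ over $G$ with $|S|=d+2n-p^k-1$ has a zero-sum subsequence of length exactly $n$. I would begin with the routine normalizations: pass to a counterexample minimal in a suitable sense, note that no $g\in G$ can occur in $S$ with multiplicity $\ge n$ (a constant block of length $n$ being a zero-sum subsequence of the required length), and invoke Olson's formula for $\D(G)$ (Theorem~\ref{Olson1}) together with Luo's formula $\eta(G)=2\D(G)-n$ (Theorem~\ref{thm:Luo}) to extract enough pairwise disjoint short zero-sum subsequences to pin down how the terms of $S$ are distributed among the relevant subgroups and their cosets.

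By induction on $\exp(G)$, the core of the argument then reduces to an analysis carried out modulo $p$, in the form given it by Reiher and, after him, by Schmid and Zhuang. Its heart is a computation with cyclotomic integers --- expanding $\prod_i\bigl(1+y\,\zeta^{\langle\chi,g_i\rangle}\bigr)$ over the characters $\chi$ of $G$, with $\zeta$ a root of unity of $p$-power order, and isolating the appropriate coefficients of $y$ --- of the number, modulo $p$, of zero-sum subsequences of $S$ whose length lies in a prescribed residue class. Unwound, the existence of a nonempty such subsequence becomes equivalent to a system of polynomial congruences over $\mathbb{F}_p$, in the $|S|$ membership variables $x_1,\dots,x_{|S|}\in\{0,1\}$ and with degrees governed by the invariant factors of $G$, having a solution other than the origin.

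This is the one place where our argument departs in substance from \cite{zhuang}. Working with the longer sequence of length $\D(G)+2\exp(G)-2$, Schmid and Zhuang complete this step by means of the Chevalley-Warning theorem; for our shorter sequence, of length $d+2n-p^k-1$, Chevalley-Warning falls just short, and we appeal instead to the Baker-Schmidt theorem (Theorem~$2$ in \cite{BS}). Because the variables are genuinely restricted to $\{0,1\}$, Baker-Schmidt still forces the number of solutions of the relevant congruence system to be divisible by $p$ --- indeed by $p^k$ --- under a hypothesis on the number of variables that is weaker by precisely the $p^k$-sized margin our sequence affords, and so still produces a solution other than the origin. It is this trade --- $p^k$ units of slack in the polynomial count against the $p^k$ in the hypothesis $d\le 2n-p^k$, hence also against the $p^k$ in the conclusion --- that drives the whole refinement; and the assumption $p\ge 3$ is used, exactly as in \cite{Kemnitz,zhuang}, in the root-of-unity identities underlying the expansion.

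The step I expect to be the genuine obstacle is re-running Reiher's modular bookkeeping at this level of generality and calibrating it sharply. His cyclotomic identities and his control, modulo $p$, of the number of length-$p$ zero-sum subsequences of a sequence over $C_p$ are tailored to the prime $p$ and to rank two; one must re-establish their analogues with the prime power $n=p^a$ in the role of $p$ and for an arbitrary rank-two-like $p$-group, and then verify, at every appeal to Baker-Schmidt, that the $p^k$ of slack surrendered in shortening $S$ is recovered \emph{exactly}, not merely up to lower-order terms, so that the quantity $-p^k$ propagates consistently from the hypothesis on $\D(G)$, through the cyclotomic computation, to the bound on $\s(G)$. Once that is secured, the surrounding combinatorics --- extracting short zero-sum subsequences via Theorems~\ref{Olson1} and~\ref{thm:Luo} and recombining them by the Erd\H{o}s-Ginzburg-Ziv theorem --- is essentially as in \cite{zhuang}.
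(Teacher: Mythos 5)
This is a plan rather than a proof, and the plan as stated would not go through. The decisive admission is your final paragraph: the ``modular bookkeeping'' that you defer is precisely the content of the theorem, and the mechanisms you sketch for it are not the ones that work. The paper's argument (like Reiher's original one) involves no characters, no cyclotomic integers, no expansion of $\prod_i\bigl(1+y\,\zeta^{\langle\chi,g_i\rangle}\bigr)$, no induction on $\exp(G)$, and no appeal to Luo's formula for $\eta(G)$. Instead, Baker--Schmidt is applied directly to a two-equation system with coefficients in $C_n$ and in $G$ (augmented by $\gamma$ dummy variables and a constant shift $\beta$), producing congruences modulo $p$ among the counts $\reih{jn-i-\beta}{J}$ for subsequences $J$ of the putative counterexample $X$ of length $\D(G)+2n-p^k-1$; these are then averaged over all $J\mid X$ of a fixed length via Lucas's theorem, combined with a double-counting identity ($\reih{n-\alpha}{J}\equiv\reih{3n-\alpha}{J}\bmod p$, obtained by counting factorizations $J=ABC$ in two ways) and with the fact that a zero-sum sequence of length $3n$ over a group with $\D(G)\le 2n$ must contain a zero-sum subsequence of length $n$; the whole system collapses to $2\equiv 0\bmod p$. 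That final collapse, not any ``root-of-unity identity,'' is where $p\ge 3$ is used.

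The most serious substantive gap is that your proposal never explains why $p^k$ must be a \emph{prime power}, and your account of where the $p^k$ of savings comes from is wrong. It is not that Baker--Schmidt ``forces divisibility by $p^k$'' or that one trades $p^k$ units of slack in the variable count: Baker--Schmidt only ever gives a congruence modulo $p$. The gain of $p^k$ comes from choosing the shift parameters $\gamma$ and $\beta$ equal to $p^k$, so that Lucas's theorem gives $\binom{p^k}{i}\equiv 0\bmod p$ for all $i\in\interv{1}{p^k-1}$ and the sums $\sum_{i=0}^{\gamma}\binom{\gamma}{i}\reih{\cdot-i}{J}$ telescope to just two terms, $\reih{\cdot}{J}+\reih{\cdot-p^k}{J}$. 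Without that collapse the congruences involve $p^k+1$ unknown counts each and cannot be closed; this is exactly why, for a general gap $c$ with $\D(G)=2\exp(G)-c$, the paper only obtains the weaker bound of Theorem \ref{thm:thmc} with $(c-1)/2$ in place of $c$ and needs an additional linear-algebra step there. A correct write-up must supply (i) the Baker--Schmidt congruence with the dummy variables and the shift $\beta$, (ii) the Lucas-theorem collapse at $\gamma=\beta=p^k$, (iii) the averaging over subsequences with the binomial coefficients $\binom{|X|-jn+i}{n}$ evaluated modulo $p$, and (iv) the two-way count giving $\reih{n-p^k}{X}\equiv\reih{3n-p^k}{X}$; none of these appears in your proposal.
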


This theorem has a certain number of interesting corollaries that we now proceed to state and quickly discuss. 

\medskip
Firstly, the following corollary gives the exact value of the Erd\H{o}s-Ginzburg-Ziv constant, and thereby confirms Conjecture \ref{ConjGao}, for a new infinite family of groups having arbitrarily large rank.

\begin{corollary}\label{cor:sbestimmen}
    Let $p \ge 3$ be a prime. If $G$ is a finite abelian $p$-group such that $\D(G)=2\exp(G)-p^k$ for some prime power $1 \le p^k \le \exp(G)$, then
    \begin{align*}
        \s(G)=2\D(G)-1\,.
    \end{align*} 
\end{corollary}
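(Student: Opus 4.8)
The plan is to derive Corollary~\ref{cor:sbestimmen} directly from Theorem~\ref{thm:thmgal} together with the general lower bound~(\ref{boundsp}). Suppose $G$ is a finite abelian $p$-group with $p \ge 3$ and $\D(G) = 2\exp(G) - p^k$ for some prime power $1 \le p^k \le \exp(G)$. In particular $\D(G) \le 2\exp(G) - p^k$, so Theorem~\ref{thm:thmgal} applies and yields
$$\s(G) \le \D(G) + 2\exp(G) - p^k - 1.$$
Now substitute the hypothesis $2\exp(G) - p^k = \D(G)$ into the right-hand side: this gives $\s(G) \le \D(G) + \D(G) - 1 = 2\D(G) - 1$. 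For the reverse inequality, I would simply invoke~(\ref{boundsp}), namely $2\D(G) - 1 \le \s(G)$, which holds for every finite abelian $p$-group. Combining the two inequalities forces $\s(G) = 2\D(G) - 1$, as claimed.

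As a final remark one might record why this confirms Conjecture~\ref{ConjGao} for the groups in question: by Theorem~\ref{thm:Luo} (which covers all $p$, in particular $p \ge 3$) we have $\eta(G) = 2\D(G) - \exp(G)$, so $\eta(G) + \exp(G) - 1 = 2\D(G) - 1 = \s(G)$, matching the predicted identity. One should also note that the hypothesis $\D(G) = 2\exp(G) - p^k$ with $p^k \le \exp(G)$ entails $\D(G) \le 2\exp(G) - 1$, so these groups are rank-two-like; and since by Theorem~\ref{Olson1} one has $\D(G) = \sum_{i=1}^r (p^{a_i} - 1) + 1$ with $\exp(G) = p^{a_r}$, it is easy to exhibit infinitely many such $G$ of arbitrarily large rank (for instance, suitable combinations of cyclic factors whose $(p^{a_i}-1)$-terms sum to $\exp(G) - p^k$).

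Since all the real work is contained in Theorem~\ref{thm:thmgal} and the elementary bound~(\ref{boundsp}), there is no genuine obstacle here: the proof is a one-line substitution sandwiched between the two inequalities. The only point requiring minimal care is checking that the hypothesis of the corollary indeed falls within the hypothesis range of Theorem~\ref{thm:thmgal}, i.e. that $2\exp(G) - p^k \le 2\exp(G) - p^k$ trivially holds and that the same prime power $p^k$ is admissible, which is immediate. Thus the corollary is a direct packaging of the main theorem in the boundary case where the upper bound meets the lower bound.
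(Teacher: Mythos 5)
Your proposal is correct and follows exactly the paper's own argument: the upper bound comes from substituting $\D(G)=2\exp(G)-p^k$ into Theorem~\ref{thm:thmgal}, and the lower bound is the general inequality~(\ref{boundsp}). The additional remarks on Conjecture~\ref{ConjGao} and on exhibiting such groups are accurate but not needed for the corollary itself.
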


Note that groups satisfying the assumptions of Corollary \ref{cor:sbestimmen} abound. Indeed, for any prime $p$ and any positive integer $k$, it follows from Theorem \ref{Olson1} that the $p$-group $G \simeq \bZ_p^{p^k}\oplus\bZ_{p^{k+1}}$ verifies $\D(G)=2\exp(G)-p^k$, where $1 \le p^k \le \exp(G)$.

\medskip
Secondly, and in the case where the exact value of the Erd\H{o}s-Ginzburg-Ziv constant remains unknown, we are able to improve on the upper bound provided by Theorem \ref{SchmidZhuang}.

\begin{corollary}\label{cor:<}
    Let $p \ge 3$ be a prime. If $G$ is a finite abelian $p$-group such that $\D(G)<2\exp(G)-1$, then 
    $$\s(G)\leq\D(G)+2\exp(G)-p-1.$$
\end{corollary}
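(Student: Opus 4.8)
The plan is to deduce Corollary \ref{cor:<} directly from Theorem \ref{thm:thmgal} by choosing the right prime power $p^k$. Since $G$ is a finite abelian $p$-group with $\D(G) < 2\exp(G)-1$, we have $\D(G) \le 2\exp(G)-2$. Writing $\exp(G)=p^a$ for some $a \ge 1$, note that $\D(G)$ is a sum of terms $p^{a_i}-1$ plus $1$ by Theorem \ref{Olson1}, but the cleaner route is just to observe that $\D(G) \le 2\exp(G)-p$ in fact holds. Indeed, one first rules out the case $\exp(G)=p$: if $\exp(G)=p$ then $G \simeq C_p^r$ for some $r$, so $\D(G)=r(p-1)+1$, and $\D(G) < 2p-1$ forces $r(p-1)+1 \le 2p-2$, i.e. $r(p-1) \le 2p-3 = 2(p-1)-1$, hence $r \le 1$; but then $\D(G)=p-1+1=p < 2p-1$ is consistent only with $G$ cyclic, and for cyclic $G$ one has $\s(G)=2\exp(G)-1=\D(G)+\exp(G)-1$, which already satisfies the claimed bound $\s(G) \le \D(G)+2\exp(G)-p-1 = \D(G)+p-1$ with equality. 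So we may assume $\exp(G)=p^a$ with $a \ge 2$, whence $p \le p^{a-1} \le \exp(G)-1$, so $p^k:=p$ is a prime power with $1 \le p^k \le \exp(G)$, and moreover $\D(G) \le 2\exp(G)-2 \le 2\exp(G)-p$ exactly when $p=2$; for $p \ge 3$ this last inequality is \emph{not} automatic, so a little more care is needed.

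The key additional input is that for a $p$-group $G$ with $p\ge 3$, if $\D(G) < 2\exp(G)-1$ then in fact $\D(G) \le 2\exp(G)-p$. To see this, write $\D(G) = 1 + \sum_{i=1}^r (p^{a_i}-1)$ with $1 \le a_1 \le \cdots \le a_r = a$. Then $2\exp(G)-\D(G) = 2p^a - 1 - \sum_{i=1}^r (p^{a_i}-1) = 2p^a - 1 - (p^a - 1) - \sum_{i=1}^{r-1}(p^{a_i}-1) = p^a - \sum_{i=1}^{r-1}(p^{a_i}-1)$. The hypothesis $\D(G) < 2\exp(G)-1$ means $2\exp(G)-\D(G) \ge 2$, i.e. $\sum_{i=1}^{r-1}(p^{a_i}-1) \le p^a - 2$. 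We must show $2\exp(G)-\D(G) \ge p$, i.e. $\sum_{i=1}^{r-1}(p^{a_i}-1) \le p^a - p$. Now $\sum_{i=1}^{r-1}(p^{a_i}-1)$ is a sum of integers each of which is $\equiv -1 \pmod p$ (as $a_i \ge 1$), and there are $r-1$ of them, so $\sum_{i=1}^{r-1}(p^{a_i}-1) \equiv -(r-1) \pmod p$; meanwhile $p^a - 2 \equiv -2 \pmod p$ and $p^a - p \equiv 0 \pmod p$. If $r-1 \equiv 0, 1, \dots$ we need to check that the sum, being at most $p^a-2$ and lying in a fixed residue class mod $p$, cannot exceed $p^a-p$ unless it equals $p^a-p+1, \dots, p^a-2$; but the largest integer $\le p^a - 2$ that is $\not\equiv 0 \pmod p$ and could cause trouble is $p^a - 2$ itself, which occurs only if $\sum_{i=1}^{r-1}(p^{a_i}-1) = p^a-2$, forcing (since each summand is at most $p^a - p$... ) a contradiction with each summand being $\equiv -1$; more directly, if $\sum_{i=1}^{r-1}(p^{a_i}-1) > p^a - p$ then it lies strictly between $p^a-p$ and $p^a - 1$, but every value in that open interval is $\equiv p^a - j \pmod p$ for some $1 \le j \le p-1$ with $j \ne 0$, and one checks these cannot be written as $-(r-1) \bmod p$ consistently with the size constraint — the honest way is simply: the sum of the remaining part equals $p^a - 1 - (\D(G)-1) + (p^a-1) $... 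I will instead argue that $2\exp(G) - \D(G)$ is congruent mod $p$ to $p^a - (p^a \bmod \text{stuff})$; cleanly, $2\exp(G)-\D(G) = p^a - \sum_{i<r}(p^{a_i}-1)$ and each $p^{a_i}-1 \le p^a - p < p^a - 1$ for $a_i < a$... this requires $a_i < a$, i.e. $a_i \le a-1$, giving $p^{a_i} - 1 \le p^{a-1}-1$; but several such terms can still sum past $p^a-p$.

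The correct and clean observation is this: \textbf{$2\exp(G) - \D(G) \equiv 0 \pmod{p-1}$ is false, but $\D(G) - 1 = \sum (p^{a_i}-1)$ is divisible by $p-1$}, hence $\D(G) \equiv 1 \pmod{p-1}$, and $2\exp(G) = 2p^a \equiv 2 \pmod{p-1}$, so $2\exp(G) - \D(G) \equiv 1 \pmod{p-1}$. Combined with $2\exp(G)-\D(G) \ge 2$ (from the hypothesis), the smallest value $\ge 2$ in the residue class $1 \bmod (p-1)$ is $p$ (since $2, 3, \dots, p-1$ are all $\not\equiv 1$, while $p \equiv 1$). Therefore $2\exp(G) - \D(G) \ge p$, i.e. $\D(G) \le 2\exp(G) - p$. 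Now I apply Theorem \ref{thm:thmgal} with $p^k = p$ (which satisfies $1 \le p \le \exp(G)$ since $G$ is a nontrivial $p$-group), obtaining $\s(G) \le \D(G) + 2\exp(G) - p - 1$, which is exactly the claim.

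The only genuine subtlety, and the step I expect to state carefully, is the divisibility argument $\D(G) \equiv 1 \pmod{p-1}$ together with the deduction that an integer $\ge 2$ congruent to $1$ modulo $p-1$ is automatically $\ge p$; everything else is a direct invocation of Theorem \ref{thm:thmgal}. One should also handle the degenerate cyclic case $G \simeq C_{p^a}$ separately if one prefers not to rely on $p \le \exp(G)$ when $a=1$ — but since $G$ cyclic of order $p$ has $\D(G)=p = 2\exp(G)-p$, Theorem \ref{thm:thmgal} still applies with $p^k=p$ and gives $\s(G) \le p + 2p - p - 1 = 2p-1$, matching the known value $\s(C_p)=2p-1$, so no separate treatment is in fact needed.
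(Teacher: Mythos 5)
Your final argument is correct and is essentially the paper's own proof: both hinge on the observation that $\D(G)\equiv 1\pmod{p-1}$ (from Theorem \ref{Olson1}) while $2\exp(G)-1$ and $2\exp(G)-p$ are consecutive integers in that residue class, so $\D(G)<2\exp(G)-1$ forces $\D(G)\le 2\exp(G)-p$, after which Theorem \ref{thm:thmgal} with $p^k=p$ applies. The exploratory false starts in your middle paragraphs should simply be deleted; only the \enquote{correct and clean observation} paragraph is needed.
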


Finally, a routine argument allows one to extend the reach of our Theorem \ref{thm:thmgal} to all direct products of any rank-two-like $p$-group with a cyclic group of order coprime to $p$. This gives the following generalization of Theorem \ref{thm:thmgal}.

\begin{theorem}\label{thm:pgpe+cycl} Let $p \ge 3$ be a prime. If $H$ is a finite abelian $p$-group such that $\D(H) \le 2\exp(H)-p^k$ for some prime power $1 \le p^k \le \exp(H)$, and if $a$ is a positive integer coprime to $p$, then $G \simeq H\oplus\bZ_{a}$ satisfies
\begin{align}
    \s(G)\leq \D(H)+2\exp(G)-p^k-1\,.\label{eq:sleqH+C_n}
\end{align}
\end{theorem}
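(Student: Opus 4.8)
The plan is to deduce Theorem \ref{thm:pgpe+cycl} from Theorem \ref{thm:thmgal} by a standard reduction that "peels off" the cyclic factor $\bZ_a$ of order coprime to $p$, using the Chinese-remainder type splitting $G \simeq H \oplus \bZ_a$ together with the multiplicativity properties of zero-sum invariants. The first step is to record the two elementary facts we need: since $\gcd(a,p)=1$ we have $\exp(G) = a \cdot \exp(H)$, and by a well-known lemma on the behaviour of $\s$ under direct products with a coprime cyclic group (the ``$\s_{\{n\}}$'' version of the Chinese remainder argument, as in Proposition 5.7.11 of \cite{GeroldingerAlfred2006NFAC}) one has
\begin{align*}
\s(H \oplus \bZ_a) \le \bigl(\s(H) - 1\bigr) a + \D(\bZ_a) = \bigl(\s(H)-1\bigr)a + a = \s(H)\, a,
\end{align*}
or more precisely the sharper bound $\s(H \oplus \bZ_a) \le (\s(H)-1)a + \eta(\bZ_a)$; I would state the exact inequality I use and cite it, since getting the additive constant right is what makes the final bound tight.

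Next I would invoke Theorem \ref{thm:thmgal} applied to the $p$-group $H$: the hypothesis $\D(H) \le 2\exp(H) - p^k$ is exactly what that theorem requires, so we get
\begin{align*}
\s(H) \le \D(H) + 2\exp(H) - p^k - 1.
\end{align*}
Substituting this into the multiplicative bound from the first step, and using $\exp(G) = a\exp(H)$, gives
\begin{align*}
\s(G) \le \bigl(\D(H) + 2\exp(H) - p^k - 1 - 1\bigr)a + (\text{constant}),
\end{align*}
and I would then reorganize the right-hand side so that the term $2\exp(H)\cdot a = 2\exp(G)$ emerges cleanly, the terms $\D(H)$ and $p^k$ pass through (they are not multiplied by $a$ in the target inequality \eqref{eq:sleqH+C_n}, so the bookkeeping must be done carefully — this is the crux), and the remaining constant absorbs into the $-1$. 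Concretely, the hoped-for arithmetic is that the chosen product formula is of the form $\s(G) \le (\s(H) - c)a + d$ with $c,d$ depending only on $a$ in a way that makes $(\D(H)-p^k-1)\cdot 1$ survive; the honest version is that one should use a product bound that already isolates the ``$\D$-type'' part, e.g. the inequality $\s(G) \le \s(H) + (\exp(G) - \exp(H)) + \cdots$ — so part of the write-up is selecting exactly the right known product inequality.

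The main obstacle, and the only real content beyond citation, is matching constants: the naive multiplicative bound $\s(H\oplus \bZ_a)\le \s(H)a - a + \D(\bZ_a)$ multiplies \emph{all} of $\s(H)$ by $a$, whereas the target \eqref{eq:sleqH+C_n} multiplies only $2\exp(H)$ by $a$ (turning it into $2\exp(G)$) and leaves $\D(H)$ and $p^k$ unscaled. To reconcile these one must use the refined product inequality in which the additive part of $\s$ is treated separately from the ``$\exp$'' part — precisely the statement, valid for $\gcd(a,\exp(H))=1$, that $\s(H\oplus\bZ_a) \le \s(H) + a\exp(H) - \exp(H) = \s(H) + \exp(G) - \exp(H)$, which follows by taking a long zero-sum-free-type witness in $H$, extending it to $G$, and applying EGZ on the cyclic factor to the remaining block of length $\exp(G)-\exp(H)+\exp(H) = \exp(G)$. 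I would prove (or cite) this refined inequality, then simply add $\exp(G) - \exp(H)$ to the bound for $\s(H)$ from Theorem \ref{thm:thmgal}:
\begin{align*}
\s(G) \le \bigl(\D(H) + 2\exp(H) - p^k - 1\bigr) + \bigl(\exp(G) - \exp(H)\bigr) = \D(H) + 2\exp(G) - p^k - 1,
\end{align*}
which is exactly \eqref{eq:sleqH+C_n}. The ``routine argument'' alluded to in the excerpt is this last one-line substitution; the part deserving care in the write-up is justifying the refined product inequality with the correct constant.
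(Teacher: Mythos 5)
Your overall strategy --- apply Theorem \ref{thm:thmgal} to the $p$-group $H$ and then transfer the bound to $G\simeq H\oplus C_a$ via a product-type inequality --- is the same as the paper's, and you correctly identified that the naive bound which multiplies all of $\s(H)$ by $a$ is too weak. However, the ``refined product inequality'' you rely on, namely $\s(H\oplus C_a)\le \s(H)+\exp(G)-\exp(H)$, is false: for $H=C_p$ and $a\ge 2$ coprime to $p$ one has $G\simeq C_{pa}$ and $\s(G)=2pa-1$, whereas your bound would give $(2p-1)+(pa-p)=pa+p-1<2pa-1$. Moreover, even granting that inequality, your final arithmetic is wrong: $\bigl(\D(H)+2\exp(H)-p^k-1\bigr)+\bigl(\exp(G)-\exp(H)\bigr)$ equals $\D(H)+\exp(H)+\exp(G)-p^k-1$, not $\D(H)+2\exp(G)-p^k-1$; these agree only when $a=1$. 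The two errors do not cancel each other.

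The correct transfer inequality, and the one the paper uses (Lemma \ref{cor:quot}), is obtained by taking $L=C_a$ as the subgroup and $H\simeq G/L$ as the quotient, so that $\s(H)$ enters unscaled: $\s(G)\le(\s(C_a)-1)\exp(H)+\s(H)=(2a-2)\exp(H)+\s(H)=\s(H)+2\bigl(\exp(G)-\exp(H)\bigr)$, using $\s(C_a)=2a-1$ from the Erd\H{o}s--Ginzburg--Ziv theorem. With this factor of $2$ in place, substituting $\s(H)\le\D(H)+2\exp(H)-p^k-1$ from Theorem \ref{thm:thmgal} gives exactly $\D(H)+2\exp(G)-p^k-1$. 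So the gap in your proposal is precisely the constant in the transfer step: you need the coefficient $2$ on $\exp(G)-\exp(H)$, and the standard subgroup--quotient inequality already supplies it once the roles of $H$ and $C_a$ are assigned correctly.
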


As a corollary, we derive the exact value of the Erd\H{o}s-Ginzburg-Ziv constant for any direct product of a finite abelian $p$-group $H$ such that $\D(H)=2\exp(H)-p^k$ for some prime power $1 \le p^k \le \exp(H)$ with a cyclic group of order coprime to $p$. This settles Conjecture \ref{ConjGao} for all finite abelian groups of this type.

\begin{corollary}\label{cor:pgpe+cycl}
Let $p \ge 3$ be a prime. If $H$ is a finite abelian $p$-group such that $\D(H)=2\exp(H)-p^k$ for some prime power $1 \le p^k \le \exp(H)$, and if $a$ is a positive integer coprime to $p$, then $G \simeq H\oplus\bZ_{a}$ satisfies $$\s(G)=\D(H)+2\exp(G)-p^k-1=2\D(G)-1.$$
\end{corollary}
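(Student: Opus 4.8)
The plan is to match the upper bound of Theorem~\ref{thm:pgpe+cycl} against the general lower bound $\s \ge 2\D^*-1$, where $\D^*(G):=1+\sum_i(n_i-1)$ for $G\simeq\bZ_{n_1}\oplus\dots\oplus\bZ_{n_r}$ in invariant-factor form, and then to identify $\D(G)$ with $\D^*(G)$. Write $m=\exp(H)$, a power of $p$, so that $\gcd(a,m)=1$ and $\exp(G)=am$. Since the invariant factors of $G=H\oplus\bZ_a$ are those of $H$ with the largest one ($=m$) multiplied by $a$, a direct computation gives $\D^*(G)=\D(H)+(a-1)m$, and then, using the hypothesis $\D(H)=2m-p^k$ together with $\exp(G)=am$,
$$2\D^*(G)-1=2\D(H)+2(a-1)m-1=\D(H)+2am-p^k-1=\D(H)+2\exp(G)-p^k-1 .$$

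For the lower bound I would adapt the construction underlying inequality~\eqref{boundetap}. Write $G$ in invariant-factor form $\bZ_{n_1}\oplus\dots\oplus\bZ_{n_{r-1}}\oplus\bZ_{n_r}$ with $n_r=am$ and each $n_j$ $(j<r)$ a power of $p$ dividing $m$; let $f$ generate the $\bZ_{n_r}$ summand and $g_j$ the $\bZ_{n_j}$ summand, and set
$$S=f^{\,n_r-1}\cdot\prod_{j=1}^{r-1}g_j^{\,n_j-1}\cdot\prod_{j=1}^{r-1}(f+g_j)^{\,n_j-1},$$
a sequence over $G$ of length $(n_r-1)+2\sum_{j<r}(n_j-1)=2\D^*(G)-\exp(G)-1$. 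I claim $S$ has no zero-sum subsequence of length $\le\exp(G)$: if $T\mid S$ uses $f$, $g_j$, $f+g_j$ with multiplicities $\alpha$, $\beta_j$, $\gamma_j$, then $\sigma(T)=0$ forces $\beta_j+\gamma_j\equiv 0\pmod{n_j}$ (hence $\beta_j+\gamma_j\in\{0,n_j\}$) and $\alpha+\sum_j\gamma_j\equiv 0\pmod{n_r}$. Putting $I=\{j<r:\beta_j+\gamma_j=n_j\}$, one checks that $I\neq\emptyset$ for a nonempty $T$ and that $\alpha+\sum_{j\in I}\gamma_j$ is a positive multiple of $n_r$, whence $|T|=\alpha+\sum_{j\in I}n_j=\big(\alpha+\sum_{j\in I}\gamma_j\big)+\sum_{j\in I}\beta_j\ge n_r+1>\exp(G)$. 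Therefore $\eta(G)\ge 2\D^*(G)-\exp(G)$, and the third inequality in \eqref{trivialbounds} gives $\s(G)\ge\eta(G)+\exp(G)-1\ge 2\D^*(G)-1$.

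Combining this with Theorem~\ref{thm:pgpe+cycl} and the first display yields $\s(G)=2\D^*(G)-1=\D(H)+2\exp(G)-p^k-1$, and squeezing then forces $\eta(G)=2\D^*(G)-\exp(G)$ and $\s(G)=\eta(G)+\exp(G)-1$, so Conjecture~\ref{ConjGao} holds for $G$. It remains only to identify $\D^*(G)$ with $\D(G)$, i.e.\ to establish $\D(H\oplus\bZ_a)=\D(H)+(a-1)\exp(H)$ --- the fact that the Davenport constant of the direct sum of a finite abelian $p$-group with a cyclic group of order prime to $p$ attains its elementary lower bound. This is where I expect the real work to lie; it should follow from Olson's group-ring argument \cite{OLSON1} after writing $\mathbb{F}_p[G]$ as a product of group algebras $\mathbb{F}_{p^{d}}[H]$ over finite extensions of $\mathbb{F}_p$ (one for each $\mathbb{F}_p$-irreducible factor of $x^a-1$), each being local with augmentation ideal nilpotent of index $\D(H)=\D^*(H)$. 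Granting it, $\s(G)=\D(H)+2\exp(G)-p^k-1=2\D(G)-1$, as claimed, the rest of the argument being a formal consequence of Theorem~\ref{thm:pgpe+cycl}, the lower bound witnessed by $S$, and invariant-factor bookkeeping.
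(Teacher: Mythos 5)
Your proposal follows the same route as the paper: the upper bound is quoted from Theorem~\ref{thm:pgpe+cycl}, the lower bound is $\s(G)\ge 2\D^*(G)-1$ where $\D^*(G)=1+\sum_i(n_i-1)$ over the invariant factors, and the two are matched by identifying $\D(G)$ with $\D^*(G)$. Your explicit sequence $S=f^{\,n_r-1}\prod_{j<r}g_j^{\,n_j-1}\prod_{j<r}(f+g_j)^{\,n_j-1}$ is precisely the construction underlying Lemma~$3.2$ of \cite{edel}, which is what the paper cites at this point; your verification is sound (for each $j$ with $\beta_j+\gamma_j=n_j$, the caps $\beta_j,\gamma_j\le n_j-1$ force $\beta_j\ge1$ \emph{and} $\gamma_j\ge1$, whence $\alpha+\sum_{j\in I}\gamma_j$ is a positive multiple of $n_r$ and $|T|\ge n_r+1$), so writing it out buys a self-contained lower bound at essentially no cost. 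The invariant-factor bookkeeping $\D^*(G)=\D(H)+(a-1)\exp(H)$ and $2\D^*(G)-1=\D(H)+2\exp(G)-p^k-1$ is correct, and you rightly observe that the first claimed equality for $\s(G)$ already follows at this stage, with only the rewriting as $2\D(G)-1$ still pending.

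The one step I would not let stand as written is that pending identification $\D(H\oplus C_a)=\D(H)+(a-1)\exp(H)$. This is exactly the paper's Lemma~\ref{lm:pgpeextended} (Corollary~$4.2.13$ in \cite{rusza}), whose hypothesis $\D(H)\le 2\exp(H)-1$ is satisfied here because $\D(H)=2\exp(H)-p^k$; the intended move is to cite it, not to re-derive it. Your sketched derivation via $\mathbb{F}_p[G]\cong\prod_i\mathbb{F}_{p^{d_i}}[H]$ does not obviously work: in each component attached to a nontrivial character $\chi$ of $C_a$, an element $X^{(h,c)}-1$ with $\chi(c)\ne1$ maps to a \emph{unit} of the local ring $\mathbb{F}_{p^{d}}[H]$, so the nilpotency of the augmentation ideals of the factors does not cap the length of a zero-sum free sequence at $\D^*(G)-1$. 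Tellingly, your sketch never uses the hypothesis $\D(H)\le 2\exp(H)-1$, whereas the known combinatorial proof of Lemma~\ref{lm:pgpeextended} relies on it, and the formula $\D=\D^*$ for a general direct sum of a $p$-group with a coprime cyclic group is not known. Replacing the sketch by the citation closes the argument; everything else in your proposal is correct and coincides in substance with the paper's proof.
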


Using the same overall approach, but with a slight twist, we obtain the following as our second main result. 
\begin{theorem}\label{thm:thmc}
    Let $p \ge 3$ be a prime. If $G$ is a finite abelian $p$-group such that $\D(G)=2\exp(G)-c$ for some $1\le c\le \exp(G)$. Then, one has
    $$\s(G)\leq\D(G)+2\exp(G)-\left(\frac{c-1}{2}\right)-2.$$
\end{theorem}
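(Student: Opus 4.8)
\emph{Overview and set-up.} The plan is to run the proof of Theorem~\ref{thm:thmgal} (itself Reiher's argument from \cite{Kemnitz} transcribed to $p$-groups, with the Baker--Schmidt theorem, Theorem~$2$ of \cite{BS}, in place of Chevalley--Warning), the ``slight twist'' being a reflection-symmetry input that is responsible for the factor $\tfrac12$ in the exponent savings. Write $n=\exp(G)$ and $\D=\D(G)=2n-c$; since $p\ge 3$ we have $n=p^a\ge 3$, and since $\D\le 2n-1$ Theorem~\ref{thm:Luo} applies and gives $\eta(G)=2\D-n$. Arguing by contradiction, I would take a sequence $S$ over $G$ of length $|S|=\D+2n-\lceil\tfrac{c-1}{2}\rceil-2$ (the length of a minimal counterexample) having no zero-sum subsequence of length $n$, and aim to derive a contradiction.

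\emph{Step 1: reduction to a short zero-sum carrier.} First I would extract from $S$, exactly as in \cite{zhuang} and in the proof of Theorem~\ref{thm:thmgal}, a sequence of pairwise disjoint zero-sum subsequences of length at most $n$ (which is possible as long as at least $\eta(G)=2\D-n$ elements remain), none of which has length $n$ by assumption, and accumulate them until their running total first exceeds $n$. Since no running total can equal $n$ (that would already be a forbidden subsequence), this produces a zero-sum subsequence $W\mid S$ with $n<|W|\le 2n-2$ which itself has no zero-sum subsequence of length $n$, while $|S\cdot W^{-1}|$ remains large enough to feed into Step~3. The crude inequalities needed here are the same as in Theorem~\ref{thm:thmgal}; the real content of the present theorem is that, thanks to Step~2, Step~3 only has to supply about $\tfrac{c-1}{2}$ variables beyond the Baker--Schmidt threshold rather than about $c$.

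\emph{Steps 2 and 3: the reflection twist, then Baker--Schmidt and clean-up.} The twist is to exploit that $W$ is zero-sum: for any subsequence $T\mid W$ the cofactor $W\,T^{-1}$ is again zero-sum, with $\sigma(T)=-\sigma(W\,T^{-1})$ and $|T|=n\iff|W\,T^{-1}|=|W|-n$, so $W$ has no zero-sum subsequence of length in the two-element set $\{n,\,|W|-n\}$, which is symmetric about $|W|/2$. Centring the ``length'' coordinate of the forthcoming polynomial system at $|W|/2$, the complementation involution $x_g\mapsto 1-x_g$ (over the elements $g$ of $W$) becomes a fixed-point-free (off the diagonal $|T|=|W|/2$) symmetry of the relevant $\{0,1\}$-solution set, and pairing solutions under it is precisely what lets the number of available variables exceed the critical value by only $\lceil\tfrac{c-1}{2}\rceil$ instead of $c-1$. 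Concretely, I would introduce $\{0,1\}$-variables indexed by the elements of $W$ (augmented, if needed, by those of $S\cdot W^{-1}$), the $r$ linear congruences modulo the invariant factors $p^{a_1},\dots,p^{a_r}$ of $G$ expressing $\sum g\,x_g=0$, and one further length congruence modulo a suitable divisor of $n$ centred as above; Theorem~$2$ of \cite{BS} then yields a nontrivial solution, hence a zero-sum subsequence of $S$ whose length is a positive multiple of that divisor of $n$, and the Reiher-type clean-up (disposing of the ``excess-length'' outputs of length $2n$, or of the complementary length $|W|-n$, by invoking the reflection symmetry once more plus a short auxiliary extraction) produces a zero-sum subsequence of length exactly $n$ --- the contradiction.

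\emph{Main obstacle.} As in Theorem~\ref{thm:thmgal}, the hard part is not the existence count but this final clean-up: one must show that a subsequence whose length is controlled only modulo a divisor of $n$ can always be repaired to length exactly $n$ using the zero-sum structure of $W$ and the Step-2 involution, and that this goes through for \emph{every} $c\in\{1,\dots,n\}$ (not just prime powers) --- so that the modulus chosen in the length congruence, the quantity $|W|-n$, and the $\lceil\tfrac{c-1}{2}\rceil$ surviving the symmetry pairing are all mutually compatible. I expect the genuinely fiddly cases to be the boundary ones: the parity of $c$ (the floor in $\tfrac{c-1}{2}$), whether $|W|$ is even or odd, and the extreme values $|W|=n+1$ and $|W|=2n-2$, where the two-element forbidden set $\{n,\,|W|-n\}$ degenerates or the remainder $S\cdot W^{-1}$ is smallest.
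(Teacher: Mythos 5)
There is a genuine gap: the mechanism that is supposed to produce a saving of $\tfrac{c-1}{2}$ rather than $c$ is never actually supplied. Your proposal rests on the claim that the complementation involution $x_g\mapsto 1-x_g$ on a zero-sum carrier $W$, after ``centring the length coordinate at $|W|/2$'', lets the number of variables exceed the Baker--Schmidt threshold by only about $\tfrac{c-1}{2}$. But Theorem~\ref{thm:BS} is a statement about the difference of even- and odd-weight solution counts subject to the hypothesis \eqref{eq:diD}, and a weight-reversing involution of the solution set gives no relaxation of that hypothesis; you give no computation showing how the pairing of solutions feeds back into the variable count, and you yourself flag the ``clean-up'' (repairing a length controlled only modulo a divisor of $n$ to length exactly $n$) as unresolved. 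Moreover, Step~1 mischaracterizes the proof of Theorem~\ref{thm:thmgal}: that proof performs no extraction of pairwise disjoint zero-sum subsequences (that is the Schmid--Zhuang strategy), it is a pure counting argument built on Corollary~\ref{BSlemma}, Lemma~\ref{binom}, Lemma~\ref{ABC} and Lemma~\ref{Dubiner}, and the carrier $W$ you construct is never used in any concrete way afterwards.

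The actual twist in the paper is linear-algebraic and quite different. Setting $c'=\tfrac{c-1}{2}+1$ (note $c$ is automatically odd, since $\D(G)$ is odd for an abelian $p$-group with $p$ odd, so no ceiling is needed), one applies Corollary~\ref{BSlemma}~a) and~b) with $\gamma=c'$ and all shifts $\beta\in\interv{1}{c'-1}$ to every subsequence $J\mid X$ of length $|X|-n$. This yields $c'$ congruences in the unknowns $\reih{k}{J}$; the coefficients of the unwanted unknowns $\reih{n-j}{J}$ and $\reih{2n-j}{J}$ for $j\in\interv{1}{c'-1}$ form, up to sign, a single binomial matrix $B$ whose transpose has rank $c'-1$, so a left null vector $(\lambda_1,\dots,\lambda_{c'})$ with $\lambda_1\ne 0$ eliminates all $2(c'-1)$ of them at the cost of only $c'$ equations --- this is where the factor $\tfrac12$ comes from, replacing the Lucas-theorem vanishing $\binom{p^k}{i}\equiv 0 \bmod p$ that was available in Theorem~\ref{thm:thmgal} only because $\gamma=p^k$ was a prime power. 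The surviving terms $\reih{n-i}{X}$ and $\reih{2n-i}{X}$ for $i\in\interv{c'}{2c'-1}$ are then handled by summing over $J$, evaluating the binomial coefficients via Remark~\ref{rq:binommodp} (in two cases according to whether $c+c'+1\le n$), and closing with Lemma~\ref{ABC} and Corollary~\ref{BSlemma}~c), d) to reach $0\equiv 2\bmod p$, a contradiction for $p\ge 3$. None of this elimination step is present in, or replaceable by, the symmetry heuristic of your proposal.
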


The outline of the paper is as follows. In Section \ref{BS&co}, the Baker-Schmidt theorem is applied to obtain useful identities modulo $p$ involving the numbers of zero-sum subsequences of each length in any long enough sequence over a rank-two-like $p$-group. In passing, one of these identities will provide a new short proof of a key element in Luo's proof of Theorem \ref{thm:Luo}. In Section \ref{combtricks}, we then proceed to the proofs of Theorem \ref{thm:thmgal} and its corollaries. In Section \ref{extraresult}, we prove Theorem \ref{thm:thmc}, and in Section \ref{conclu}, a few concluding remarks will be made.

\section{Baker-Schmidt Theorem and useful corollaries}
\label{BS&co}

For any two integers $a\leq b$, let us set $\interv{a}{b}=\{x \in \mathbb{Z} : a \le x \le b\}$. Now, let $p$ be a prime and $\ell,s$ be two positive integers. Given a family $G_1,\dots,G_{\ell}$ of finite abelian $p$-groups, and a family $\cF_1,\dots,\cF_{\ell}$ of polynomials such that $\cF_i \in G_i[x_1,\dots,x_s]$ for each $i \in \interv{1}{\ell}$, we are interested in the solutions $\varepsilon \in \{0,1\}^s$ to the system
$$\cF_i(\varepsilon)=0 \text{ in } G_i,\ \text{ for every } i \in \interv{1}{\ell}.$$ 

Each such solution $\varepsilon=
(\varepsilon_1,\dots,\varepsilon_s)$ has a \em support \em  $S(\varepsilon)=\{i \in \interv{1}{s} : \varepsilon_i=1\}$ and a \em weight \em $\text{w}(\epsilon)=|S(\varepsilon)|=\sum^s_{i=1} \varepsilon_i$. The total number of solutions $\varepsilon \in \{0,1\}^s$ of even (resp. odd) weight to the above system will be denoted by $A(\cF_1,\dots,\cF_\ell)$ (resp. $B(\cF_1,\dots,\cF_\ell))$.

\medskip
We are now ready to state the Baker-Schmidt theorem (Theorem $2$ in \cite{BS}), a useful extension of Theorem \ref{Olson1} which will be key to our purpose.

\begin{theorem}\label{thm:BS}Let $p$ be a prime and $\ell$ be a positive integer. For every $i \in \interv{1}{\ell}$, let $G_i$ be a finite abelian $p$-group, and let $\cF_i$ be a polynomial in $G_i[x_1,\dots,x_s]$ of total degree $d_i$. If
    \begin{align}
        s \ge \sum_{i=1}^{\ell} d_i(\D(G_i)-1) + 1\,,\label{eq:diD}
    \end{align}
    then
    \begin{align*}
        A(\cF_1,\dots,\cF_\ell)-B(\cF_1,\dots,\cF_\ell) \equiv0\mod p\,.
    \end{align*}
\end{theorem}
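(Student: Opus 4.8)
The plan is a group-algebra computation in the style of Olson's proof of the Davenport-constant upper bound, adapted to the \emph{signed} count $A-B$ by an inclusion--exclusion identity. The one substantive input will be Olson's nilpotency theorem (the engine behind Theorem~\ref{Olson1}): for a finite abelian $p$-group $H$, the augmentation ideal $I_H$ of the group algebra $\mathbb{F}_p[H]$ is nilpotent, with $I_H^{\D(H)}=0$. Everything else is bookkeeping.

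First I would fix the algebra. For each $i$, realize $G_i$ multiplicatively inside $\mathbb{F}_p[G_i]$, so that $g\in G_i$ gives a basis element $X^g$ with $X^{g+g'}=X^gX^{g'}$, and let $\varphi_i\colon\mathbb{F}_p[G_i]\to\mathbb{F}_p$ be the $\mathbb{F}_p$-linear map returning the coefficient of $X^0$, so that $\varphi_i(X^g)=1$ if $g=0$ and $0$ otherwise. Working inside the algebra $R:=\mathbb{F}_p[G_1]\otimes_{\mathbb{F}_p}\cdots\otimes_{\mathbb{F}_p}\mathbb{F}_p[G_\ell]\cong\mathbb{F}_p[G_1\times\cdots\times G_\ell]$, form
\[
\Theta:=\sum_{\varepsilon\in\{0,1\}^s}(-1)^{\mathrm{w}(\varepsilon)}\,X^{\cF_1(\varepsilon)}\otimes\cdots\otimes X^{\cF_\ell(\varepsilon)}.
\]
Since $(\varphi_1\otimes\cdots\otimes\varphi_\ell)\bigl(X^{g_1}\otimes\cdots\otimes X^{g_\ell}\bigr)$ equals $1$ precisely when every $g_i=0$, applying $\varphi_1\otimes\cdots\otimes\varphi_\ell$ to $\Theta$ picks out exactly the $\varepsilon$ solving the whole system, each weighted by $(-1)^{\mathrm{w}(\varepsilon)}$; hence $A(\cF_1,\dots,\cF_\ell)-B(\cF_1,\dots,\cF_\ell)\equiv(\varphi_1\otimes\cdots\otimes\varphi_\ell)(\Theta)\pmod p$, so it suffices to prove that $\Theta=0$ in $R$.

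Next I would expand $\Theta$. Since $\varepsilon_j\in\{0,1\}$ gives $\varepsilon_j^a=\varepsilon_j$ for $a\ge 1$, collecting the monomials of $\cF_i$ by their support lets one write $\cF_i(\varepsilon)=\sum_{T\subseteq S(\varepsilon),\,|T|\le d_i}c_{i,T}$ with coefficients $c_{i,T}\in G_i$ and $S(\varepsilon)$ the support of $\varepsilon$; thus $X^{\cF_i(\varepsilon)}=\prod_{T\subseteq S(\varepsilon),\,|T|\le d_i}X^{c_{i,T}}$. Put $y_{i,T}:=X^{c_{i,T}}-1\in I_i$, substitute $X^{c_{i,T}}=1+y_{i,T}$, multiply out each product, and sum over $\varepsilon$ (equivalently over $S=S(\varepsilon)\subseteq\interv{1}{s}$); grouping the resulting terms by which families of coefficients occur yields
\[
\Theta=\sum_{\mathcal{C}_1,\dots,\mathcal{C}_\ell}\Bigl(\,\prod_{i=1}^{\ell}\prod_{T\in\mathcal{C}_i}y_{i,T}\Bigr)\sum_{U\subseteq S\subseteq\interv{1}{s}}(-1)^{|S|},\qquad U:=\bigcup_{i=1}^{\ell}\bigcup\mathcal{C}_i,
\]
where each $\mathcal{C}_i$ runs over families of subsets of $\interv{1}{s}$ of size at most $d_i$. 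The inner alternating sum equals $(-1)^{|U|}(1-1)^{s-|U|}$, hence vanishes unless $U=\interv{1}{s}$, so only those tuples $(\mathcal{C}_1,\dots,\mathcal{C}_\ell)$ whose members jointly cover $\interv{1}{s}$ survive.

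Finally I would kill the survivors. For such a tuple, $\prod_{T\in\mathcal{C}_i}y_{i,T}\in I_i^{|\mathcal{C}_i|}$, so the corresponding term lies in $I_1^{|\mathcal{C}_1|}\otimes\cdots\otimes I_\ell^{|\mathcal{C}_\ell|}$, which is $0$ by Olson's nilpotency theorem as soon as $|\mathcal{C}_i|\ge\D(G_i)$ for some $i$. But covering $\interv{1}{s}$ forces $\sum_{i=1}^{\ell}\sum_{T\in\mathcal{C}_i}|T|\ge s$, hence $\sum_{i=1}^{\ell}d_i|\mathcal{C}_i|\ge s\ge\sum_{i=1}^{\ell}d_i(\D(G_i)-1)+1$, so it is impossible for $|\mathcal{C}_i|\le\D(G_i)-1$ to hold for every $i$; thus every surviving term vanishes, $\Theta=0$, and the theorem follows. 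I expect the only delicate point to be this final comparison: the hypothesis \eqref{eq:diD} is calibrated precisely so that the covering inequality $\sum_i d_i|\mathcal{C}_i|\ge s$ cannot coexist with all the nilpotency thresholds being unmet, and it is exactly the $d_i$-weighting together with the appearance of $\D(G_i)-1$ rather than $\D(G_i)$ that has to be tracked carefully — and, of course, had Olson's nilpotency statement not already been available, reproving it would be the real work.
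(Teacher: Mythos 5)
The paper itself gives no proof of this statement: Theorem \ref{thm:BS} is imported verbatim from Baker and Schmidt (Theorem 2 in \cite{BS}), so there is nothing internal to compare against. Your group-ring argument is correct and is essentially the original one: the identification of $A-B$ with the coefficient of the identity in $\Theta$, the expansion $X^{c_{i,T}}=1+y_{i,T}$ together with the vanishing of $\sum_{S\supseteq U}(-1)^{|S|}$ unless $U=\interv{1}{s}$, and the weighted pigeonhole $\sum_i d_i|\mathcal{C}_i|\ge s>\sum_i d_i(\D(G_i)-1)$ forcing some $|\mathcal{C}_i|\ge\D(G_i)$ are all sound, modulo Olson's nilpotency theorem $I_H^{\D(H)}=0$ in $\mathbb{F}_p[H]$, which you rightly flag as the one substantive external input (it is the same result that underlies Theorem \ref{Olson1}).
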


Before deducing from Theorem \ref{thm:BS} an important lemma that will be at the core of our proofs, we recall a notation that was originally introduced in \cite{Kemnitz}.

\medskip
Given a finite abelian group $G$, a sequence $X\in \cF(G)$ and an integer $k$, we denote by $\reih{k}{X}$ the number of subsequences $Y\mid X$ of length $|Y| = k$ such that $\sigma(Y)=0$.
In particular, note that $\reih{0}{X}=1$ and that $\reih{k}{X}=0$ whenever $k < 0$ or $k > |X|$.

    \begin{lemma}\label{lm:lmdebase}
        Let $p$ be a prime and $G$ be a finite abelian $p$-group with $\exp(G)=n$. Moreover, let $\gamma,\beta\geq0$ and $k\geq 2$ be integers. If $J\in \cF(G)$ is a sequence of length $t\in\auf\D(G)+n-1-\gamma, \, kn-1-\gamma-\beta\zu$, then one has
        \begin{align}
            \sum_{j=0}^{k-1}(-1)^j\left(\sum_{i=0}^{\gamma}\binom{\gamma}{i}\reih{jn-i-\beta}{J}\right)\equiv0\mod p\,.\label{eq:BSlmgeneral}
        \end{align}
    \end{lemma}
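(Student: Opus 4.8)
The plan is to deduce the congruence from the Baker--Schmidt theorem (Theorem~\ref{thm:BS}), applied to a system of two linear polynomials over $p$-groups: one recording the sum in $G$ and one recording the length modulo $n=\exp(G)$. The first step is a purely combinatorial reinterpretation of the inner sum. Let $J'=J\cdot 0^{\gamma}\in\cF(G)$ be the sequence obtained from $J$ by appending $\gamma$ copies of $0$. For every integer $m$ one then has $\reih{m}{J'}=\sum_{i=0}^{\gamma}\binom{\gamma}{i}\reih{m-i}{J}$, because a zero-sum subsequence of $J'$ of length $m$ is specified by choosing $i\le\gamma$ of the appended zeros together with a zero-sum subsequence of $J$ of length $m-i$. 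Taking $m=jn-\beta$, the left-hand side of (\ref{eq:BSlmgeneral}) is rewritten as $\sum_{j=0}^{k-1}(-1)^{j}\reih{jn-\beta}{J'}$, and since $|J'|=t+\gamma$ the hypothesis on $t$ becomes $|J'|\in\interv{\D(G)+n-1}{kn-1-\beta}$. It thus suffices to prove $\sum_{j=0}^{k-1}(-1)^{j}\reih{jn-\beta}{J'}\equiv 0\bmod p$.

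Next I would set up the polynomial system. Write $J'=h_1\cdots h_{t'}$ with $t'=|J'|$, and put $G_1=G$, $\cF_1=\sum_{i=1}^{t'}h_ix_i\in G[x_1,\dots,x_{t'}]$, $G_2=C_n$ and $\cF_2=\bar\beta+\sum_{i=1}^{t'}x_i\in C_n[x_1,\dots,x_{t'}]$, where $\bar\beta$ denotes the image of $\beta$ in $C_n$. Since $n$ is a power of $p$, the group $C_n$ is a $p$-group and $\D(C_n)=n$; moreover $\cF_1$ and $\cF_2$ both have total degree at most $1$, so condition (\ref{eq:diD}) is implied by $t'\ge\D(G)+n-1$, which holds by hypothesis. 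Hence Theorem~\ref{thm:BS} yields $A(\cF_1,\cF_2)-B(\cF_1,\cF_2)\equiv 0\bmod p$. Now $\varepsilon\in\{0,1\}^{t'}$ solves the system exactly when the subsequence $Y$ of $J'$ consisting of the terms indexed by $S(\varepsilon)$ has $\sigma(Y)=0$ and $|Y|\equiv-\beta\bmod n$, and the weight of $\varepsilon$ is then $|Y|$; consequently
\[
A(\cF_1,\cF_2)-B(\cF_1,\cF_2)=\sum_{\substack{Y\mid J',\ \sigma(Y)=0\\ |Y|\equiv-\beta\ (n)}}(-1)^{|Y|}.
\]
As $0\le|Y|\le t'\le kn-1-\beta$, each such length must be of the form $jn-\beta$ with $0\le j\le k-1$ -- this is exactly where the upper bound on $t$ enters -- so the right-hand side equals $\sum_{j=0}^{k-1}(-1)^{jn-\beta}\reih{jn-\beta}{J'}$.

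Finally I would reconcile the sign $(-1)^{jn-\beta}$ with the sign $(-1)^{j}$ of the claim. If $n$ is odd, then $(-1)^{jn-\beta}=(-1)^{\beta}(-1)^{j}$, so the last sum equals $(-1)^{\beta}\sum_{j=0}^{k-1}(-1)^{j}\reih{jn-\beta}{J'}$; multiplying $A(\cF_1,\cF_2)-B(\cF_1,\cF_2)\equiv 0\bmod p$ by the unit $(-1)^{\beta}$ gives the claim. If $n$ is even, then necessarily $p=2$ and $(-1)^{jn-\beta}=(-1)^{\beta}$ is independent of $j$, so the congruence becomes $\sum_{j=0}^{k-1}\reih{jn-\beta}{J'}\equiv 0\bmod 2$; since $(-1)^{j}\equiv 1\bmod 2$, the desired congruence follows once more. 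Combined with the rewriting from the first step, this proves (\ref{eq:BSlmgeneral}). The one slightly delicate point is the bookkeeping in the second step -- matching $\{0,1\}$-solutions of the system with zero-sum subsequences of $J'$ while correctly tracking both their weight parity and their length modulo $n$, and verifying that the bound $t\le kn-1-\gamma-\beta$ is precisely what keeps those lengths inside $\{jn-\beta:0\le j\le k-1\}$; the rest is routine.
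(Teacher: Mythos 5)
Your proof is correct and follows essentially the same route as the paper: the same two-equation Baker--Schmidt system (one equation in $G$, one in $C_n$ recording length plus $\beta$), the same weight count, and the same parity discussion at the end. The only difference is presentational: you append $\gamma$ zeros to $J$ and use $\reih{m}{J\cdot 0^{\gamma}}=\sum_{i=0}^{\gamma}\binom{\gamma}{i}\reih{m-i}{J}$, whereas the paper adds $\gamma$ auxiliary variables appearing only in the $C_n$-equation --- these are literally the same system.
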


\begin{proof}
Write $J=g_1\cdots g_t$ and consider the system in $t+\gamma$ variables consisting of the following two polynomial equations of degree one:

\begin{IEEEeqnarray}{rC,C"t,L}
     &\sum_{i=1}^{t}x_i+\sum_{j=1}^{\gamma}x_{t+j} + \beta & = \, 0 & in & \bZ_n\,,\label{eq:betagammaZ}
     \\*[0.4\normalbaselineskip]
    \smash{\left\{
      \IEEEstrut[14\jot]
    \right.} \nonumber\\
   &\sum_{i=1}^{t}g_ix_i  & = \, 0 & in & G\,.\label{eq:betagammaG} 
\end{IEEEeqnarray}
First, a tuple $\varepsilon=(\varepsilon_1,\dots,\varepsilon_{t+\gamma}) \in \{0,1\}^{t+\gamma}$ is a solution to \eqref{eq:betagammaZ} if and only if $\text{w}(\varepsilon)\equiv-\beta\mod n$, that is $\text{w}(\varepsilon) = jn-\beta$ for some $0\leq j\leq k-1$ (since by hypothesis, $\text{w}(\varepsilon) \le t + \gamma \le (kn-1-\gamma-\beta)+\gamma<kn-\beta$). 
Therefore, the solutions $\varepsilon=(\varepsilon_1,\dots,\varepsilon_{t+\gamma})\in \{0,1\}^{t+\gamma}$ to the above system are exactly those the weight of which is $\text{w}(\varepsilon) = jn-\beta$ for some $0\leq j\leq k-1$ and the first $t$ coordinates of which satisfy $\sum^t_{i=1}g_i\varepsilon_i=0$. Also, note that the weight of $(\varepsilon_1,\dots,\varepsilon_t)$ varies between $\text{w}(\varepsilon)-\gamma$ and $\text{w}(\varepsilon)$.

Now, for each $j\in \interv{0}{k-1}$ and each $i\in\interv{0}{\gamma}$, there are $\reih{jn-\beta-i}{J}$ tuples $(\varepsilon_1,\dots,\varepsilon_t)$ of weight $jn-\beta-i$ satisfying (\ref{eq:betagammaG}), and for each such tuple, there are $\binom{\gamma}{i}$ ways to choose $(\varepsilon_{t+1},\dots,\varepsilon_{t+\gamma}) \in \{0,1\}^{\gamma}$ so that $(\varepsilon_1,\dots,\varepsilon_{t+\gamma})$ is a solution to the above system (the only constraint on such a $(\varepsilon_{t+1},\dots,\varepsilon_{t+\gamma})$ being that its weight must be equal to $i$).
Thus, for each $j\in \interv{0}{k-1}$, the total number of solutions $\varepsilon \in \{0,1\}^{t+\gamma}$ of weight $\text{w}(\epsilon)=jn-
\beta$ to the above system is $\sum_{i=0}^{\gamma}\binom{\gamma}
{i}\reih{jn-\beta-i}{J}$.

Since
\begin{align*}
   t+\gamma
    &\ge \D(G)+n-1 \\
    &>\D(G)+n-2\\
    &= 1\cdot(\D(G)-1)+1\cdot(\D(\bZ_n)-1)\,,
\end{align*}
it follows from Theorem \ref{thm:BS} that
$$\sum_{j=0}^{k-1}(-1)^{jn-\beta}\left(\sum_{i=0}^{\gamma}\binom{\gamma}{i}\reih{jn-i-\beta}{J}\right)\equiv0\mod p\,.$$
If $n$ is even, then $p=2$ and we don't mind the signs. If $n$ is odd, then $jn$ has the same parity as $j$. In both cases, the desired result is proved.
\end{proof}

Note that whenever $p$ is prime and $n$ is a power of $p$, applying Lemma \ref{lm:lmdebase} with $k=2$ and $\gamma=\beta=0$ to any sequence $J$ over $\bZ_n$ of length $|J|=2n-1$ yields $\reih{n}{J} \equiv 1 \mod p$ and thus proves the Erd\H{o}s-Ginzburg-Ziv theorem.
Most importantly, Lemma \ref{lm:lmdebase} implies the following.

\begin{corollary}\label{BSlemma}
    Let $p$ be a prime and $G$ be a finite abelian $p$-group with $\exp(G)=n$. The following four statements hold.
    \begin{enumerate}[label=\alph*)]
        \item Let $\gamma\ge0$ be an integer, and $J\in \cF(G)$ be a sequence of length $t\in\auf\D(G)+n-1-\gamma, \, 3n-1-\gamma\zu$. Then, one has
        \begin{align}
             1-\left(\sum_{i=0}^{\gamma}\binom{\gamma}{i}\reih{n-i}{J}\right)+\left(\sum_{i=0}^{\gamma}\binom{\gamma}{i}\reih{2n-i}{J}\right)\equiv0\mod p\,.\label{eq:a}
        \end{align}
                \item Let $\gamma\ge0$ and $\beta \ge 1$ be integers, and $J\in \cF(G)$ be a sequence of length $t\in\auf\D(G)+n-1-\gamma, \, 3n-1-\gamma-\beta\zu$. Then, one has
        \begin{align}
             \left(\sum_{i=0}^{\gamma}\binom{\gamma}{i}\reih{n-i-\beta}{J}\right)-\left(\sum_{i=0}^{\gamma}\binom{\gamma}{i}\reih{2n-i-\beta}{J}\right)\equiv0\mod p\,.\label{eq:betagamma}
        \end{align}
        \item Let $J\in \cF(G)$ be a sequence of length $t\in\auf\D(G)+n-1,4n-1\zu$. Then, one has
        \begin{align}
            1-\reih{n}{J}+\reih{2n}{J}-\reih{3n}{J}\equiv0\mod p\,.\label{eq:d}
        \end{align}
        \item Let $\beta\ge 1$ be an integer, and $J\in \cF(G)$ be a sequence of length $t\in\auf\D(G)+n-1,4n-1-\beta\zu$. Then, one has
        \begin{align}
            \reih{n-\beta}{J}-\reih{2n-\beta}{J}+\reih{3n-\beta}{J}\equiv0\mod p\,.\label{eq:e}
        \end{align}
    \end{enumerate}
\end{corollary}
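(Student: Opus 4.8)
The plan is to derive all four statements as direct specializations of Lemma~\ref{lm:lmdebase}, matching the parameters $k$, $\gamma$, $\beta$ appropriately and then massaging the resulting congruence into the stated form. For part a), I would apply the lemma with $k=3$ and $\beta=0$: the hypothesis on the length becomes $t\in\auf\D(G)+n-1-\gamma,\,3n-1-\gamma\zu$, exactly as stated, and the conclusion \eqref{eq:BSlmgeneral} reads $\sum_{j=0}^{2}(-1)^j\left(\sum_{i=0}^{\gamma}\binom{\gamma}{i}\reih{jn-i}{J}\right)\equiv0\bmod p$. Writing out the three terms $j=0,1,2$ and using $\reih{-i}{J}=0$ for $i\ge1$ together with $\reih{0}{J}=1$, the $j=0$ contribution collapses to $1$, and one gets exactly \eqref{eq:a}. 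For part b), I would apply the lemma with $k=3$ and the given $\beta\ge1$: now the $j=0$ term contributes $\sum_{i=0}^{\gamma}\binom{\gamma}{i}\reih{-i-\beta}{J}=0$ since all arguments $-i-\beta$ are negative, and the remaining $j=1,2$ terms, after dividing the whole congruence by $-1$, give precisely \eqref{eq:betagamma}.

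For parts c) and d), I would take $k=4$ instead. Setting $\gamma=0$ kills the inner sum (only $i=0$ survives, with $\binom{0}{0}=1$), so for c) with $\beta=0$ the length condition is $t\in\auf\D(G)+n-1,\,4n-1\zu$ and \eqref{eq:BSlmgeneral} becomes $\sum_{j=0}^{3}(-1)^j\reih{jn}{J}\equiv0\bmod p$, i.e. $\reih{0}{J}-\reih{n}{J}+\reih{2n}{J}-\reih{3n}{J}\equiv0$, which is \eqref{eq:d} after using $\reih{0}{J}=1$. For d) with $\gamma=0$ and the given $\beta\ge1$, the length condition is $t\in\auf\D(G)+n-1,\,4n-1-\beta\zu$, the $j=0$ term $\reih{-\beta}{J}$ vanishes, and the remaining three terms with $j=1,2,3$ give $-\reih{n-\beta}{J}+\reih{2n-\beta}{J}-\reih{3n-\beta}{J}\equiv0\bmod p$; multiplying by $-1$ yields \eqref{eq:e}.

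The one bookkeeping point worth checking carefully is that Lemma~\ref{lm:lmdebase} requires $k\ge2$ and $\gamma,\beta\ge0$, all of which are satisfied in each case (in b) and d) we even have $\beta\ge1$), and that the length intervals are nonempty — but this is not something to prove here, since the statement of the corollary is conditional on $J$ having a length in the specified range, so if the range is empty there is simply nothing to check. I do not anticipate any genuine obstacle: the entire corollary is an unpacking of Lemma~\ref{lm:lmdebase} for the small values $k\in\{3,4\}$, and the only mild subtlety is correctly accounting for which boundary terms vanish (those with a negative argument in $\reih{\cdot}{J}$) and which one equals $1$ (namely $\reih{0}{J}$ when $\beta=0$). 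I would present the four cases in sequence, each in two or three lines, citing Lemma~\ref{lm:lmdebase} with the explicit choice of parameters and then reading off the stated congruence.
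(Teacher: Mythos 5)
Your proposal is correct and is exactly the paper's proof: each part is obtained by specializing Lemma~\ref{lm:lmdebase} with $k=3$ (parts a, b) or $k=4$ (parts c, d) and the indicated choices of $\gamma$ and $\beta$. The only difference is that you spell out the bookkeeping of which boundary terms vanish ($\reih{m}{J}=0$ for $m<0$) and which equal $1$ ($\reih{0}{J}=1$), which the paper leaves implicit.
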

\begin{proof}
    \emph{a)} Apply Lemma \ref{lm:lmdebase} with $k=3$ and $\beta=0$.

    \medskip
    
    \emph{b)} Apply Lemma \ref{lm:lmdebase} with $k=3$.
    
    \medskip

    \emph{c)} Apply Lemma \ref{lm:lmdebase} with $k=4$ and $\gamma=\beta=0$.

    \medskip

    \emph{d)} Apply Lemma \ref{lm:lmdebase} with $k=4$ and $\gamma=0$.
\end{proof}

\begin{corollary}\label{cor}
    Let $p$ be a prime, $G$ be a finite abelian $p$-group with $\exp(G)=n$ and $J\in\cF(G)$ be a sequence of length $t\in\auf\D(G)+n-1,3n-1\zu$. If $\reih{n}{J}\equiv0\mod p$, then $\reih{2n}{J}\equiv-1\mod p$.
\end{corollary}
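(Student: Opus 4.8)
The plan is to read off the statement directly from part \emph{a)} of Corollary \ref{BSlemma}, specialized to $\gamma = 0$. First I would note that the hypothesis places $J$ in exactly the length range $t \in \auf \D(G)+n-1, \, 3n-1 \zu$ required by \eqref{eq:a} when $\gamma = 0$ (the binomial sums there collapse to a single term since $\binom{0}{0}=1$ and $\reih{n-i}{J}$, $\reih{2n-i}{J}$ contribute only for $i=0$). Applying Corollary \ref{BSlemma}\emph{a)} therefore yields the congruence
$$1 - \reih{n}{J} + \reih{2n}{J} \equiv 0 \mod p.$$

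Then I would simply substitute the assumption $\reih{n}{J} \equiv 0 \mod p$ into this identity, obtaining $1 + \reih{2n}{J} \equiv 0 \mod p$, which rearranges to $\reih{2n}{J} \equiv -1 \mod p$, as desired.

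There is essentially no obstacle here: the corollary is a one-line consequence of the $\gamma=0$ case of Corollary \ref{BSlemma}\emph{a)} (equivalently, of Lemma \ref{lm:lmdebase} with $k=3$ and $\gamma=\beta=0$). The only point that merits a glance is the compatibility of the length hypotheses, and these coincide exactly, so no extra case analysis or auxiliary estimate is needed.
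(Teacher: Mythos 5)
Your proposal is correct and is exactly the paper's own argument: the authors also prove Corollary \ref{cor} by setting $\gamma=0$ in Corollary \ref{BSlemma}~a) and substituting $\reih{n}{J}\equiv 0 \bmod p$. The length hypotheses do coincide precisely as you note, so nothing further is needed.
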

\begin{proof}
    Set $\gamma=0$ in Corollary \ref{BSlemma} a).
\end{proof}

\medskip
To conclude this section, we notice that Corollary \ref{BSlemma} allows one to easily prove the following statement (Lemma $3.2$ in \cite{luo}) which plays an important role in the proof of Luo's Theorem \ref{thm:Luo}.

\begin{corollary}
    Let $p$ be a prime and $G$ be a finite abelian $p$-group with $\exp(G)=n$. Then, for every $j\in\interv{1}{n}$, one has $$\s_{\interv{j}{n}+\bN}(G)=\D(G)+j-1.$$
\end{corollary}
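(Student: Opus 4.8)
Throughout, I read $\s_{\interv{j}{n}+\bN}(G)$ as the least integer $\ell$ for which every sequence over $G$ of length at least $\ell$ has a nonempty zero-sum subsequence of length at least $j$, and the plan is to prove the two matching inequalities $\s_{\interv{j}{n}+\bN}(G)\ge\D(G)+j-1$ and $\s_{\interv{j}{n}+\bN}(G)\le\D(G)+j-1$.

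For the lower bound I would produce a sequence of length $\D(G)+j-2$ with no zero-sum subsequence of length $\ge j$. Taking $S_0=W\cdot 0^{j-1}$, where $W\in\cF(G)$ is a zero-sum-free sequence of maximal length $|W|=\D(G)-1$, does the job: since $W$ is zero-sum-free, every zero-sum subsequence of $S_0$ equals $0^a$ for some $0\le a\le j-1$ and thus has length $<j$. Hence $\s_{\interv{j}{n}+\bN}(G)>\D(G)+j-2$.

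For the upper bound, let $S\in\cF(G)$ with $|S|=\D(G)+j-1$ — if $|S|$ is larger, restrict to $\D(G)+j-1$ of its terms — and let me produce a zero-sum subsequence of length $\ge j$. The key step is to apply Corollary \ref{BSlemma} a) to $J=S$ with the parameter $\gamma=n-j\ge 0$: its length hypothesis is $|S|\in\interv{\D(G)+n-1-\gamma}{3n-1-\gamma}=\interv{\D(G)+j-1}{2n+j-1}$, whose left endpoint equals $|S|$, while the right endpoint dominates $|S|$ exactly when $\D(G)\le 2n$, in particular for every rank-two-like $p$-group. The corollary then yields the congruence
$$1-\sum_{i=0}^{n-j}\binom{n-j}{i}\reih{n-i}{S}+\sum_{i=0}^{n-j}\binom{n-j}{i}\reih{2n-i}{S}\equiv 0\pmod p,$$
in which the only subsequence lengths that occur are $n-i\in\interv{j}{n}$ and $2n-i\in\interv{n+j}{2n}$, all of them $\ge j$. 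Consequently, if $S$ had no zero-sum subsequence of length $\ge j$, every $\reih{\cdot}{S}$ above would vanish modulo $p$ and the left-hand side would reduce to $1\not\equiv 0\pmod p$, a contradiction; so some $\reih{n-i}{S}$ or $\reih{2n-i}{S}$ is positive and yields the desired subsequence. For the remaining $p$-groups, where $\D(G)>2n$ makes Corollary \ref{BSlemma} vacuous, I would instead argue elementarily: extract minimal nonempty zero-sum subsequences one after another until the leftover has length $<\D(G)$; their union is then a zero-sum subsequence of length $\ge|S|-(\D(G)-1)=j$.

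The one step deserving care is the choice $\gamma=n-j$ in the upper bound, together with the observation that it keeps \emph{every} length appearing in Corollary \ref{BSlemma} a) at least $j$ — precisely what turns the harmless constant term $1$ into the existence of a long zero-sum subsequence. The rest (the reduction to length $\D(G)+j-1$, the range check, the structure of $S_0$, and the greedy extraction) is routine, and the two bounds together give $\s_{\interv{j}{n}+\bN}(G)=\D(G)+j-1$ for all $j\in\interv{1}{n}$.
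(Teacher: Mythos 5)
Your lower-bound construction and your key idea for the upper bound (apply the Baker--Schmidt congruence with $\gamma=n-j$, so that every length occurring in it is at least $j$) coincide with the paper's. The difficulty lies in your reading of the index set and, as a consequence, in the case $\D(G)>2n$. The notation $\interv{j}{n}+\bN$ is never defined explicitly, but the paper's own proof (and Luo's Lemma~3.2, which is being reproved) shows it is meant as the union of intervals $\bigcup_{q\ge 1}\interv{(q-1)n+j}{qn}$, i.e.\ the lengths congruent to $-i \bmod n$ for some $i\in\interv{0}{n-j}$ --- exactly the lengths for which the hypothesis forces $\reih{qn-i}{T}=0$. This set is strictly smaller than your $\{t: t\ge j\}$, so the statement to be proved is strictly stronger than the one you prove. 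Your congruence argument is in fact fine for the stronger statement, since the lengths it produces lie in $\interv{j}{n}\cup\interv{n+j}{2n}$; but because you invoked Corollary~\ref{BSlemma}~a) (which is Lemma~\ref{lm:lmdebase} frozen at $k=3$), it only applies when $\D(G)\le 2n$. Your fallback for $\D(G)>2n$, the greedy extraction of minimal zero-sum subsequences, yields a zero-sum subsequence of length at least $j$ with no control on its residue modulo $n$; for $j\ge 2$ such a length can land in $\interv{qn+1}{qn+j-1}$, outside the intended set, so that branch does not prove the corollary.

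The repair is to bypass Corollary~\ref{BSlemma} and apply Lemma~\ref{lm:lmdebase} directly to $S$ with $\gamma=n-j$, $\beta=0$ and $k$ chosen large enough that $kn-1-\gamma\ge \D(G)+j-1$, which is possible for every $p$-group. Under the correct hypothesis that $S$ has no zero-sum subsequence of length in $\bigcup_{q\ge1}\interv{(q-1)n+j}{qn}$, every term $\reih{j'n-i}{S}$ with $j'\ge 1$ vanishes, the $j'=0$ term contributes only $\reih{0}{S}=1$, and \eqref{eq:BSlmgeneral} collapses to $1\equiv 0\pmod p$, a contradiction. This is the paper's argument; it treats all values of $\D(G)$ uniformly and makes the greedy branch unnecessary.
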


\begin{proof} Let $j\in\interv{1}{n}$. On the one hand, if $S$ is a sequence of length $|S| = \D(G)-1$ containing no non-empty zero-sum subsequence, then $T=0^{j-1}S$ has length $|T| = \D(G)+j-2$ and the only non-empty zero-sum subsequences of $T$ are those of the form $0^i$, where $i\in\interv{1}{j-1}$. It follows that  $\s_{\interv{j}{n}+\bN}(G)\geq\D(G)+j-1$.
 
    Now, assume for a contradiction that there exists a sequence $T$ of length $|T| = \D(G)+j-1$ containing no zero-sum subsequence of length $t\in\interv{j}{n}+\bN$. It follows that $\reih{qn-i}{T}=0$ for every $q \in \mathbb{N}$ and every $i\in\interv{0}{n-j}$. Applying Corollary \ref{lm:lmdebase} with $\gamma=n-j$, $\beta=0$ and any large enough integer $k$ then gives $1 \equiv 0 \mod p$, a contradiction.
\end{proof}

\section{Proofs of Theorem \ref{thm:thmgal} and its corollaries}
\label{combtricks}

Let us start with a classical theorem due to Lucas \cite{lucas1878congruences}.

\begin{theorem}
\label{thm:Lucas}
    Let $p$ be a prime, and $r$ be an integer that is large enough so that we can write $m=m_0+\dots+m_rp^r$ and $k=k_0+\dots+k_rp^r$ with all $m_i$ and $k_i$ in $\interv{0}{p-1}$. Then, one has
    \begin{align*}
        \binom{m}{k}\equiv\prod_{i=0}^r\binom{m_i}{k_i}\mod p\,.
    \end{align*}
\end{theorem}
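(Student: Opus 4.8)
The plan is to give the standard generating-function proof in the polynomial ring $\bZ_p[x]$ (where, as in the paper, $\bZ_p$ denotes the cyclic group $C_p$, here regarded as the field $\mathbb{F}_p$). The key algebraic fact is the Frobenius-type congruence $(1+x)^{p^i}\equiv 1+x^{p^i}\pmod p$, and the key combinatorial fact is the uniqueness of base-$p$ expansions.

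\textbf{Step 1: the Frobenius congruence.} First I would observe that for $0<j<p$ the binomial coefficient $\binom{p}{j}$ is divisible by $p$, so $(1+x)^p\equiv 1+x^p\pmod p$ in $\bZ_p[x]$. Raising to the $p$-th power repeatedly (and using that the map $f\mapsto f^p$ respects this congruence) gives $(1+x)^{p^i}\equiv 1+x^{p^i}\pmod p$ for every $i\ge 0$.

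\textbf{Step 2: factor the generating function.} Using $m=\sum_{i=0}^r m_ip^i$, write
\[
(1+x)^m=\prod_{i=0}^r\klam{(1+x)^{p^i}}^{m_i}\equiv\prod_{i=0}^r\klam{1+x^{p^i}}^{m_i}\pmod p.
\]
Then I would compare the coefficient of $x^k$ on both sides: on the left it is $\binom{m}{k}$.

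\textbf{Step 3: extract the coefficient on the right.} Expanding $\klam{1+x^{p^i}}^{m_i}=\sum_{j_i=0}^{m_i}\binom{m_i}{j_i}x^{j_ip^i}$ and multiplying, a monomial $x^k$ arises precisely from the choices $(j_0,\dots,j_r)$ with $\sum_{i=0}^r j_ip^i=k$ and $0\le j_i\le m_i$. Since each $m_i\le p-1$, we have $0\le j_i\le p-1$, so $\sum j_ip^i$ is a base-$p$ expansion of $k$; by uniqueness of such expansions the only possibility is $j_i=k_i$ for all $i$. Hence the coefficient of $x^k$ on the right is $\prod_{i=0}^r\binom{m_i}{k_i}$, and the two sides being congruent mod $p$ yields the claim. (Here $\binom{m_i}{k_i}=0$ when $k_i>m_i$, consistent with the convention that $\binom{m}{k}\equiv 0$ when some digit of $k$ exceeds the corresponding digit of $m$.)

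\textbf{Main obstacle.} The only subtle point is Step 3: making precise that comparing the coefficient of $x^k$ isolates a \emph{single} term of the product. This is exactly where the hypothesis $m_i,k_i\in\interv{0}{p-1}$ is used, via uniqueness of base-$p$ representations; everything else is routine.
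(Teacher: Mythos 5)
Your proof is correct. Note, however, that the paper does not prove this statement at all: it is Lucas's classical congruence, stated with a citation to Lucas (1878) and used as a black box (only through Remark \ref{rq:binommodp}), so there is no in-paper argument to compare against. Your generating-function derivation --- the Frobenius congruence $(1+x)^{p^i}\equiv 1+x^{p^i}\pmod p$, the factorization $(1+x)^m\equiv\prod_i(1+x^{p^i})^{m_i}$, and coefficient extraction via uniqueness of base-$p$ expansions --- is the standard proof and is complete, including the degenerate case where some $k_i>m_i$ and both sides vanish.
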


\begin{remark}\label{rq:binommodp}
    Note that by Theorem \ref{thm:Lucas}, $\binom{hn+a}{n}\equiv h\mod p$ holds for every $a\in\interv{0}{n-1}$ when $h$ is a positive integer.
\end{remark}

Theorem \ref{thm:Lucas} will be used in the proof of the following lemma, which will allow us to derive analogues of identity \eqref{eq:BSlmgeneral} for long sequences from knowledge about shorter ones. 

\begin{lemma}\label{binom}
    Let $p$ be a prime and $G$ be a finite abelian $p$-group with $\exp(G)=n$. Let $\gamma\in\interv{0}{n-1}$ be an integer and suppose that we have $n+1+\gamma\leq\D(G)\leq2n$. If $X\in\cF(G)$ is a sequence of length $|X|\in\auf\D(G)+2n-1-\gamma,4n-1-\gamma\zu$, then
    \begin{align}
        3-2\left(\sum_{i=0}^{\gamma}\binom{\gamma}{i}\reih{n-i}{X}\right)+\left(\sum_{i=0}^{\gamma}\binom{\gamma}{i}\reih{2n-i}{X}\right)\equiv0\mod p\,.\label{eq:binomlemma}
    \end{align}
\end{lemma}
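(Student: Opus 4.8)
The plan is to deduce \eqref{eq:binomlemma} from part a) of Corollary \ref{BSlemma} by the standard ``delete-and-average'' device used by Reiher, together with Lucas' theorem. The only combinatorial input needed is the elementary identity
\begin{align*}
    \sum_{\substack{J \mid X \\ |J| = m - L}} \reih{k}{J} = \binom{m-k}{L}\,\reih{k}{X},
\end{align*}
valid for any sequence $X \in \cF(G)$ of length $m$ and any integers $k, L \ge 0$. It follows by double counting the pairs $(J,Y)$ with $Y \mid J \mid X$, $|J| = m - L$, $|Y| = k$ and $\sigma(Y) = 0$: each fixed zero-sum subsequence $Y$ of length $k$ is contained in exactly $\binom{m-k}{L}$ subsequences $J$ of length $m - L$.

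First I would set $m = |X|$ and apply this identity with $L = n$. Since $m \in \auf \D(G) + 2n - 1 - \gamma,\, 4n - 1 - \gamma\zu$, every subsequence $J \mid X$ with $|J| = m - n$ has length $m-n$ lying in the interval $\auf \D(G) + n - 1 - \gamma,\, 3n - 1 - \gamma\zu$, which is non-empty because $\D(G) \le 2n$ and which is precisely the one to which Corollary \ref{BSlemma} a) applies. Summing \eqref{eq:a} over all such $J$ and using the identity above with $k = 0$, $k = n - i$ and $k = 2n - i$, I obtain
\begin{align*}
    \binom{m}{n} &- \sum_{i=0}^{\gamma}\binom{\gamma}{i}\binom{m-n+i}{n}\reih{n-i}{X} \\
    &+ \sum_{i=0}^{\gamma}\binom{\gamma}{i}\binom{m-2n+i}{n}\reih{2n-i}{X} \equiv 0 \pmod p.
\end{align*}

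The last step is to evaluate the three outer binomial coefficients modulo $p$ through Remark \ref{rq:binommodp}. The hypotheses $n + 1 + \gamma \le \D(G) \le 2n$ and $0 \le i \le \gamma \le n - 1$ pin down $m \in \interv{3n}{4n-1}$, $m - n + i \in \interv{2n}{3n-1}$ and $m - 2n + i \in \interv{n}{2n-1}$, whence $\binom{m}{n} \equiv 3$, $\binom{m-n+i}{n} \equiv 2$ and $\binom{m-2n+i}{n} \equiv 1 \pmod p$ by Remark \ref{rq:binommodp}. Substituting these values turns the displayed congruence into precisely \eqref{eq:binomlemma}.

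The argument is essentially bookkeeping, so the only points that need real care — and where the slightly technical hypotheses of the lemma get used up — are checking that deleting $n$ terms from $X$ places its length inside the window of Corollary \ref{BSlemma} a), and that the three binomial coefficients land in the ranges where Lucas' theorem collapses them to the constants $3$, $2$ and $1$. I expect no genuine obstacle beyond this range-tracking.
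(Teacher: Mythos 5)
Your proposal is correct and follows essentially the same route as the paper: sum the congruence of Corollary \ref{BSlemma} a) over all subsequences $I\mid X$ of length $|X|-n$, convert the sums via the double-counting identity $\sum_{|I|=|X|-n}\reih{k}{I}=\binom{|X|-k}{n}\reih{k}{X}$, and collapse the resulting binomial coefficients to $3$, $2$, $1$ using Remark \ref{rq:binommodp}. The range checks you perform ($|X|-n\in\auf\D(G)+n-1-\gamma,3n-1-\gamma\zu$ and $|X|-n+i\in\interv{2n}{3n-1}$, $|X|-2n+i\in\interv{n}{2n-1}$) match the paper's exactly, so there is nothing to add.
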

\begin{proof} Let $X\in\cF(G)$ be a sequence of length $|X|\in\auf\D(G)+2n-1-\gamma,4n-1-\gamma\zu$, and $I\mid X$ be a subsequence of length $|I| = |X|-n$. 
    Since by hypothesis, we have $|X|-n\in\auf\D(G)+n-1-\gamma,3n-1-\gamma\zu$, we can apply Corollary \ref{BSlemma} a), so that $I$ satisfies 
    $1-\left(\sum_{i=0}^{\gamma}\binom{\gamma}{i}\reih{n-i}{I}\right)+\left(\sum_{i=0}^{\gamma}\binom{\gamma}{i}\reih{2n-i}{I}\right)\equiv0\mod p$. 
      We obtain
    \begin{align*}
        0\equiv& \ \sum_{\substack{I\mid X\\|I|=|X|-n}}\left(1-\left(\sum_{i=0}^{\gamma}\binom{\gamma}{i}\reih{n-i}{I}\right)+\left(\sum_{i=0}^{\gamma}\binom{\gamma}{i}\reih{2n-i}{I}\right)\right)\\
        =& \ \binom{|X|}{n}-\left(\sum_{i=0}^{\gamma}\binom{\gamma}{i}\binom{|X|-n+i}{n}\reih{n-i}{X}\right)\\
        &+\left(\sum_{i=0}^{\gamma}\binom{\gamma}{i}\binom{|X|-2n+i}{n}\reih{2n-i}{X}\right)\\
        =& \ 3-2\left(\sum_{i=0}^{\gamma}\binom{\gamma}{i}\reih{n-i}{X}\right)+\left(\sum_{i=0}^{\gamma}\binom{\gamma}{i}\reih{2n-i}{X}\right)\mod p\,.
    \end{align*}
    Here, the last congruence follows from Remark \ref{rq:binommodp}, that implies $\binom{s+i}{n}\equiv h\mod p$ for every $s\in\auf hn,(h+1)n-1-\gamma\zu$ when $h$ is a positive integer, for all $i\in\auf0,\gamma\zu$. Indeed, we have $|X|\in\auf\D(G)+2n-1-\gamma,4n-1-\gamma\zu\subseteq\auf3n,4n-1-\gamma\zu$, so that $|X|-n\in\auf2n,3n-1-\gamma\zu$ and $|X|-2n\in\auf n,2n-1-\gamma\zu$. 
\end{proof}

The following lemma is similar in spirit to Lemma \ref{binom}, but deals with the case where the Davenport constant of $G$ is at most $n+\gamma$. For the sake of simplicity, and since this lemma will later be used in a very special case only, we choose not to write it in full generality. 

\begin{lemma}\label{binom_pte_dav}
    Let $p$ be a prime, and $G$ be a finite abelian $p$-group with $\exp(G)=n$. Moreover, suppose that $n+1 \le \D(G)\leq n+p^k$ for some prime power $1 \le p^k \le n$. If $X\in\cF(G)$ is a sequence of length $|X|\in\auf\D(G)+2n-1-p^k,\,3n-1\zu$ such that $\reih{n}{X}=0$, then
    \begin{align}
        2-2\reih{n-p^k}{X}+\reih{2n-p^k}{X}\equiv0\mod p\,.\label{eq:321}
    \end{align}
\end{lemma}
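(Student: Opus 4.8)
The plan is to imitate the proof of Lemma~\ref{binom}: transfer the congruence of Corollary~\ref{BSlemma}~a) from short subsequences of $X$ back to $X$ itself. The new ingredient is that the hypothesis $\reih{n}{X}=0$ allows us to discard two of the four terms that would otherwise survive.

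First I would record the elementary fact that $\reih{n}{X}=0$ forces $\reih{n}{I}=0$ for \emph{every} subsequence $I\mid X$, since a zero-sum length-$n$ subsequence of $I$ would also be one of $X$. Then I would fix an arbitrary subsequence $I\mid X$ with $|I|=|X|-n$ and apply Corollary~\ref{BSlemma}~a) to $I$ with $\gamma=p^k$, the hypotheses being exactly what is needed to check that $|I|$ lies in the required interval $\auf\D(G)+n-1-p^k,\,3n-1-p^k\zu$. By Lucas' theorem (Theorem~\ref{thm:Lucas}), $\binom{p^k}{i}\equiv 0\bmod p$ for $0<i<p^k$, so the two inner binomial sums collapse to their $i=0$ and $i=p^k$ terms and the congruence reads $1-\reih{n}{I}-\reih{n-p^k}{I}+\reih{2n}{I}+\reih{2n-p^k}{I}\equiv 0\bmod p$. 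Now $\reih{n}{I}=0$ by the observation above, and $\reih{2n}{I}=0$ because $|I|=|X|-n\le 2n-1<2n$; hence every such $I$ satisfies $1-\reih{n-p^k}{I}+\reih{2n-p^k}{I}\equiv 0\bmod p$.

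Next I would sum this over all $\binom{|X|}{n}$ subsequences $I\mid X$ of length $|X|-n$. Using the standard double count $\sum_{I\mid X,\,|I|=|X|-n}\reih{m}{I}=\binom{|X|-m}{n}\reih{m}{X}$ (a fixed zero-sum subsequence of $X$ of length $m$ lies in $\binom{|X|-m}{n}$ of these $I$), the sum becomes $\binom{|X|}{n}-\binom{|X|-n+p^k}{n}\reih{n-p^k}{X}+\binom{|X|-2n+p^k}{n}\reih{2n-p^k}{X}\equiv 0\bmod p$. Finally I would evaluate the three binomial coefficients modulo $p$: since $\D(G)\ge n+1$ gives $|X|\ge 3n-p^k$ while $|X|\le 3n-1$ and $p^k\le n$, the arguments $|X|$, $|X|-n+p^k$, $|X|-2n+p^k$ lie respectively in $\auf 2n,3n-1\zu$, $\auf 2n,3n-1\zu$, $\auf n,2n-1\zu$, so Remark~\ref{rq:binommodp} gives them the residues $2$, $2$, $1$. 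Substituting yields $2-2\reih{n-p^k}{X}+\reih{2n-p^k}{X}\equiv 0\bmod p$, which is \eqref{eq:321}.

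I expect the only real difficulty to be the interval bookkeeping: checking that $|I|=|X|-n$ really falls into the domain of Corollary~\ref{BSlemma}~a), and that the three binomial arguments sit precisely in the ranges where Remark~\ref{rq:binommodp} returns $2$, $2$, $1$. These are the checks that consume the hypotheses $n+1\le\D(G)\le n+p^k$ and $p^k\le n$; the extra simplification $\reih{2n}{I}=0$, which is what separates this argument from the proof of Lemma~\ref{binom}, then comes for free from $|I|\le 2n-1$.
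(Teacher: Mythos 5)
Your proposal is correct and follows essentially the same route as the paper's own proof: apply Corollary~\ref{BSlemma}~a) with $\gamma=p^k$ to each $I\mid X$ of length $|X|-n$, use Lucas' theorem to collapse the binomial sums and the hypotheses $\reih{n}{X}=0$, $|I|\le 2n-1$ to kill the $\reih{n}{I}$ and $\reih{2n}{I}$ terms, then double-count over all such $I$ and evaluate the three binomial coefficients via Remark~\ref{rq:binommodp}. The interval checks you outline match those in the paper, so nothing is missing.
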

\begin{proof}
    Let $X\in\cF(G)$ be a sequence satisfying the assumptions of the lemma, and $I\mid X$ be a subsequence of length $|I| = |X|-n$. 
    Since $|X|-n\in\auf\D(G)+n-1-p^k, \, 2n-1\zu$ and $2n-1 \le 3n-1-p^k$, Corollary \ref{BSlemma} a) applies with $\gamma=p^k$. Using the fact that, by Theorem \ref{thm:Lucas}, $\binom{p^k}{i}\equiv0\mod p$ for every $i\in\interv{1}{p^k-1}$, we obtain
    \begin{align*}
        0&\equiv1-\left(\sum_{i=0}^{p^k}\binom{p^k}{i}\reih{n-i}{I}\right)+\left(\sum_{i=0}^{p^k}\binom{p^k}{i}\reih{2n-i}{I}\right)\\
        &\equiv1-\reih{n}{I}-\reih{n-p^k}{I}+\reih{2n}{I}+\reih{2n-p^k}{I}\\
    & \equiv1-\reih{n-p^k}{I}+\reih{2n-p^k}{I}\mod p\,,
    \end{align*}
where, for the last congruence, we used $\reih{n}{I}=0$ (this follows from $\reih{n}{X}=0$) as well as $\reih{2n}{I}=0$ (this follows from $|I| = |X| -n \leq 2n-1$). 

Now, summing over all subsequences $I \mid X$ of length $|I| = |X|-n$ yields
    \begin{align*}
        0&\equiv\sum_{\substack{I\mid X\\|I|=|X|-n}}\left(1-\reih{n-p^k}{I}+\reih{2n-p^k}{I}\right)\\
        &=\binom{|X|}{n}-\binom{|X|-n+p^k}{n}\reih{n-p^k}{X}
        +\binom{|X|-2n+p^k}{n}\reih{2n-p^k}{X}\\
        &\equiv2-2\reih{n-p^k}{X}+\reih{2n-p^k}{X}\mod p\,,
    \end{align*}
    where the last congruence follows from Remark \ref{rq:binommodp}, as we have $|X|\in\auf\D(G)+2n-1-p^k, \, 3n-1\zu\subseteq\auf3n-1-p^k, \, 3n-1\zu\subseteq\interv{2n}{3n-1}$ (since $n+1 \le \D(G)$) and thus $|X|-n+p^k\in\auf\D(G)+n-1, \, 2n-1+p^k\zu\subseteq\auf 2n, \, 2n-1+p^k\zu\subseteq\auf2n, \, 3n-1\zu$ (since $n+1 \le \D(G)$) as well as $|X|-2n+p^k\in\auf\D(G)-1, \, n-1+p^k\zu\subseteq\auf n, \, 2n-1\zu$.
\end{proof}

Another key lemma for the proofs of our Theorems \ref{thm:thmgal} and \ref{thm:thmc} is the following.

\begin{lemma}\label{ABC}
    Let $p$ be a prime and $G$ be a finite abelian $p$-group with $\exp(G)=n$. Let $J\in\cF(G)$ be a sequence with $\reih{n}{J}=0$. 
    Let also $\alpha\in\interv{1}{n}$ be an integer such that $\alpha\leq2n+1-\D(G)$ and $|J|\in\auf\D(G)+2n-\alpha-1,4n-\alpha-1\zu$. 
    Then we have $\reih{n-\alpha}{J}\equiv\reih{3n-\alpha}{J}\mod p$.
\end{lemma}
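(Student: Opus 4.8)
The plan is to reduce the statement to a ``doubling'' congruence by a single application of Corollary~\ref{BSlemma}~d), and then to establish that congruence by the subsequence‑counting device from Reiher's argument together with an induction on $\alpha$.

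\emph{Reduction.} Since $1\le\alpha\le n$ we have $\D(G)+2n-\alpha-1\ge\D(G)+n-1$, so the hypothesis on $|J|$ places it in the interval $\auf\D(G)+n-1,\,4n-1-\alpha\zu$, and Corollary~\ref{BSlemma}~d) applies to $J$ with $\beta=\alpha$, giving
\[
\reih{n-\alpha}{J}-\reih{2n-\alpha}{J}+\reih{3n-\alpha}{J}\equiv0\mod p .
\]
Hence it suffices to prove $\reih{2n-\alpha}{J}\equiv 2\,\reih{n-\alpha}{J}\mod p$. Running Corollary~\ref{BSlemma}~d) also with $\beta=1,\dots,\alpha-1$, and Corollary~\ref{BSlemma}~c) together with $\reih{n}{J}=0$, records the auxiliary congruences $\reih{n-i}{J}-\reih{2n-i}{J}+\reih{3n-i}{J}\equiv0$ for $1\le i\le\alpha-1$ and $\reih{2n}{J}-\reih{3n}{J}\equiv-1\mod p$, which will serve to clear unwanted terms. (One may assume $G$ non‑cyclic, so $\D(G)\ge n+1$; for cyclic $G$ the hypotheses are vacuous by the Erd\H{o}s--Ginzburg--Ziv theorem.)

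\emph{Averaging over subsequences.} The key step is to apply Corollary~\ref{BSlemma}~a) with parameter $\gamma=\alpha$ to \emph{every} subsequence $I\mid J$ of length $|I|=|J|-n$ --- the required bounds $\D(G)+n-1-\alpha\le|I|\le3n-1-\alpha$ being exactly what the two bounds on $|J|$ supply --- and to sum the resulting congruences over all $\binom{|J|}{n}$ such $I$, using that a fixed zero‑sum subsequence of length $k$ lies in precisely $\binom{|J|-k}{n}$ subsequences of length $|J|-n$. Reducing each $\binom{|J|-k}{n}$ modulo $p$ by Remark~\ref{rq:binommodp} (only $\lfloor|J|/n\rfloor\in\{2,3\}$ occurs, since $n+1\le\D(G)\le2n+1-\alpha$), and then substituting the auxiliary congruences from the reduction step to replace $\reih{n-i}{J}-\reih{2n-i}{J}$ by $-\reih{3n-i}{J}$ and to discard the terms $\reih{3n-i}{J}$ that vanish for length reasons, one expects the identity to collapse to
\[
\sum_{i=0}^{\alpha}\binom{\alpha}{i}\bigl(\reih{n-i}{J}-\reih{3n-i}{J}\bigr)\equiv2\mod p .
\]
Replacing Corollary~\ref{BSlemma}~a) by the small‑$\beta$ instances of Corollary~\ref{BSlemma}~d) (whose wider length range still covers $I$) yields, by the same bookkeeping, the analogous shifted congruences $\sum_{i=0}^{\alpha}\binom{\alpha}{i}(\reih{n-\beta-i}{J}-\reih{3n-\beta-i}{J})\equiv c_\beta\mod p$; and the same run with parameter $\gamma<\alpha$ becomes available whenever $|J|$ exceeds the lower endpoint of its range.

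\emph{Conclusion and main obstacle.} From the displayed identity (and its shifts), a finite‑difference argument in the binomial parameter isolates $\reih{n-\alpha}{J}-\reih{3n-\alpha}{J}$ once the analogous differences $\reih{n-i}{J}-\reih{3n-i}{J}$ for $1\le i\le\alpha-1$ are known to vanish and $\reih{3n}{J}$ is pinned down. When $|J|>\D(G)+2n-\alpha-1$, the lemma for $\alpha-1$ applies verbatim to $J$ (its length range contains $|J|$), so by induction those differences vanish; in the remaining boundary case $|J|=\D(G)+2n-\alpha-1$ one instead applies Corollary~\ref{BSlemma}~a) directly to $J$ --- licit exactly in that regime --- to obtain $\reih{3n-i}{J}\equiv0$ for the small indices, and uses the extra averaged congruences available for $\gamma$ in a whole interval around $\alpha$ for the intermediate ones. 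I expect the technical heart to be the modular bookkeeping of the binomials $\binom{|J|-k}{n}$ in the averaging step: one must verify, case by case on $\lfloor|J|/n\rfloor$ and $|J|\bmod n$, that the parasitic terms produced by the non‑constant binomials recombine --- via the auxiliary congruences --- into the clean identities above, and, secondarily, that the induction closes up cleanly at the lower endpoint $|J|=\D(G)+2n-\alpha-1$, where the inductive hypothesis is not directly applicable to $J$.
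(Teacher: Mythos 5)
Your opening reduction (Corollary \ref{BSlemma} d) with $\beta=\alpha$, which turns the lemma into $\reih{2n-\alpha}{J}\equiv2\reih{n-\alpha}{J}\bmod p$) is correct, but the averaging-plus-induction route you propose for that congruence fails, and not merely because of the binomial bookkeeping you flag. Carry the averaging out in the clean regime $\D(G)\ge n+\alpha+1$, $|J|\ge 3n$: summing Corollary \ref{BSlemma} a) with $\gamma=\alpha$ over all $I\mid J$ of length $|J|-n$ gives $3-2\sum_{i}\binom{\alpha}{i}\reih{n-i}{J}+\sum_{i}\binom{\alpha}{i}\reih{2n-i}{J}\equiv0$ (this is Lemma \ref{binom}); eliminating the terms $\reih{2n-i}{J}$ via Corollary \ref{BSlemma} c) and d) and using $\reih{n}{J}=\reih{3n}{J}=0$ (the latter by Lemma \ref{Dubiner}) leaves exactly your anticipated identity
\[
\sum_{i=1}^{\alpha}\binom{\alpha}{i}\bigl(\reih{n-i}{J}-\reih{3n-i}{J}\bigr)\equiv2\mod p.
\]
If the inductive instances now give $\reih{n-i}{J}\equiv\reih{3n-i}{J}$ for $1\le i\le\alpha-1$, this collapses to $\reih{n-\alpha}{J}-\reih{3n-\alpha}{J}\equiv2$, which for odd $p$ is the \emph{negation} of the statement you are proving. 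That is no accident: the congruence $0\equiv2$ is precisely the final contradiction in the paper's proof of Theorem \ref{thm:thmgal}, obtained by combining this very averaging identity with the already-established Lemma \ref{ABC}; the same combination cannot simultaneously serve as a proof of the lemma. There are secondary gaps as well: the lemma for a smaller parameter $i<\alpha$ requires $|J|\ge\D(G)+2n-i-1$, a \emph{stronger} lower bound, so at the endpoint $|J|=\D(G)+2n-\alpha-1$ (the only length used in the application) none of the inductive instances is available, and your proposed endpoint fix (obtaining $\reih{3n-i}{J}\equiv0$ from Corollary \ref{BSlemma} a)) is not something that corollary yields.

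The paper's proof is of a completely different and much simpler nature: it double-counts the number $N$ of factorizations $J=ABC$ with $|A|=n-\alpha$, $|C|=2n$ and $\sigma(A)=\sigma(C)=0$. Choosing $A$ first, one has $|JA^{-1}|=|J|-n+\alpha\in\interv{\D(G)+n-1}{3n-1}$ and $\reih{n}{JA^{-1}}=0$, so Corollary \ref{cor} gives $\reih{2n}{JA^{-1}}\equiv-1$ and hence $N\equiv-\reih{n-\alpha}{J}$; choosing $B$ first, $|JB^{-1}|=3n-\alpha$ lies in the same interval (this is exactly where the hypothesis $\alpha\le2n+1-\D(G)$ enters), so likewise $N\equiv-\reih{3n-\alpha}{J}$. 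Comparing the two counts gives the lemma with no induction and no case analysis on $\D(G)$ or on $|J|\bmod n$. If you want to salvage your strategy, you need an input beyond Corollary \ref{BSlemma}; the two-sided count of triple decompositions is that input.
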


\begin{proof}
     Let $N$ be the number of different ways to write $J=A B C$ so that $|A|=n-\alpha$, $|C|=2n$ and $\sigma(A)=\sigma(C)=0$. Let also $JA^{-1}$ (resp. $JB^{-1}$) denote the only sequence $T_A$ (resp. $T_B$) in $\cF(G)$ satisfying $AT_A=J$ (resp. $BT_B=J$). Note that $\reih{n}{J}=0$ implies $\reih{n}{JA^{-1}}=0$ and $\reih{n}{JB^{-1}}=0$. 
     
     On the one hand, since $|J A^{-1}|=|J|-n+\alpha\in\auf\D(G)+n-1,3n-1\zu$ and $\reih{n}{J A^{-1}}=0$, it follows from Corollary \ref{cor} that
    \begin{align*}
        N&=\sum_{\substack{A\mid J \\ |A|=n-\alpha \\ \sigma(A)=0}}\sum_{\substack{C\mid J A^{-1} \\ |C|=2n \\ \sigma(C)=0}}1\\
        &=\sum_{\substack{A\mid J \\ |A|=n-\alpha \\ \sigma(A)=0}}\reih{2n}{J A^{-1}}\\
        &\equiv\sum_{\substack{A\mid J \\ |A|=n-\alpha \\ \sigma(A)=0}}(-1)\\
        &=-\reih{n-\alpha}{J}\mod p.
    \end{align*}

    \medskip

    On the other hand, since $|JB^{-1}|=3n-\alpha\in\auf\D(G)+n-1,3n-1\zu$ and $\reih{n}{J B^{-1}}=0$, it follows from Corollary \ref{cor} that
    \begin{align*}
        N&=\sum_{\substack{B\mid J \\ |B|=|J|-3n+\alpha \\ \sigma(JB^{-1})=0}}\sum_{\substack{C\mid J B^{-1} \\ |C|=2n \\ \sigma(C)=0}}1\\
        &=\sum_{\substack{B\mid J \\ |B|=|J|-3n+\alpha \\ \sigma(JB^{-1})=0}}\reih{2n}{J B^{-1}}\\
        &\equiv\sum_{\substack{B\mid J \\ |B|=|J|-3n+\alpha \\ \sigma(JB^{-1})=0}}(-1)\\
        &=\sum_{\substack{B'\mid J \\ |B'|=3n-\alpha \\ \sigma(B')=0}}(-1)\\
        &=-\reih{3n-\alpha}{J}\mod p\,.
    \end{align*}
    
    These two identities put together give $\reih{n-\alpha}{J}\equiv-N\equiv\reih{3n-\alpha}{J}\mod p$, which is the desired result.
\end{proof}

Finally, we will need the following result generalizing Lemma $3.2$ in \cite{5pves}.

\begin{lemma}\label{Dubiner}
    Let $p$ be a prime and $G$ be a finite abelian $p$-group with $\exp(G)=n$. If $\D(G)\leq2n$ and $J\in\cF(G)$ is a zero-sum sequence of length $|J|=3n$, then $\reih{n}{J}\neq0$.
\end{lemma}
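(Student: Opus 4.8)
The plan is to argue by contradiction: assume $\reih{n}{J}=0$ and derive a contradiction from the mod-$p$ identities of Section~\ref{BS&co}. It is worth first recording why the obvious attempts fail. Applying Corollary~\ref{BSlemma}~c) directly to $J$ only produces $1-\reih{n}{J}+\reih{2n}{J}-\reih{3n}{J}\equiv 0 \bmod p$, which under the assumption is a tautology, since $\reih{3n}{J}=1$ and the complementation map $Y\mapsto JY^{-1}$ (a bijection between zero-sum subsequences of length $n$ and those of length $2n$, available because $\sigma(J)=0$) forces $\reih{2n}{J}=\reih{n}{J}=0$. And Corollary~\ref{cor}, which would give the nontrivial value $\reih{2n}{\cdot}\equiv -1$, cannot be applied to $J$ itself, because $|J|=3n$ sits just above the admissible range $\auf\D(G)+n-1,\,3n-1\zu$. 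So the main obstacle is purely one of range, and the decisive idea is to shorten $J$ by one term.

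Concretely, I would write $J=gK$ with $g\in G$ and $|K|=3n-1$ (legitimate since $|J|=3n\ge1$). Then $|K|=3n-1$ lies exactly at the top of $\auf\D(G)+n-1,\,3n-1\zu$, and this is precisely where the hypothesis $\D(G)\le2n$ is used. Since every zero-sum subsequence of $K$ of length $n$ is also one of $J$, the assumption $\reih{n}{J}=0$ gives $\reih{n}{K}=0$, and Corollary~\ref{cor} then yields $\reih{2n}{K}\equiv -1\bmod p$; in particular $\reih{2n}{K}$ is a nonzero integer.

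Finally I would contradict this by showing $\reih{2n}{K}=0$ directly: if $Y\mid K$ has $|Y|=2n$ and $\sigma(Y)=0$, then $Y\mid J$ and the complementary sequence $JY^{-1}$ has length $3n-2n=n$ and sum $\sigma(J)-\sigma(Y)=0$, so $J$ would have a zero-sum subsequence of length $n$, contrary to our assumption. Hence no such $Y$ exists, i.e.\ $\reih{2n}{K}=0$, contradicting $\reih{2n}{K}\equiv -1\bmod p$. This finishes the argument. The proof is short; the only points needing care are the range check $3n-1\in\auf\D(G)+n-1,\,3n-1\zu$ (equivalent to $\D(G)\le2n$) and the nonemptiness of $J$, so the ``hard part'' is really just the conceptual step of passing to the length-$(3n-1)$ subsequence $K$ instead of working with $J$.
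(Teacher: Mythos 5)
Your proof is correct and is essentially identical to the paper's: the paper also deletes a single element $x$ from $J$ to obtain a subsequence of length $3n-1$, applies Corollary~\ref{cor} to get $\reih{2n}{Jx^{-1}}\equiv -1 \bmod p$, and then complements the resulting zero-sum subsequence of length $2n$ inside $J$ to produce a zero-sum subsequence of length $n$, contradicting the assumption. Your additional remarks on why the na\"ive applications fail are accurate but not needed.
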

\begin{proof}
    Let $J$ be a zero-sum sequence of length $3n$ and $x$ be any element of $J$. Assume for a contradiction that $\reih{n}{J}=0$ and denote by $Jx^{-1}$ the only sequence $T_x \in \cF(G)$ such that $xT_x=J$. Since $|Jx^{-1}|=3n-1$, Corollary \ref{cor} implies $\reih{2n}{Jx^{-1}}\equiv-1\mod p$. It follows that $Jx^{-1}$, and hence $J$ itself, contains a zero-sum subsequence $U$ of length $2n$. The only sequence $T$ in $\cF(G)$ such that $TU=J$ then satisfies $\sigma(T)=0$ and $|T|=3n-2n=n$, so that $\reih{n}{J}\neq0$, a contradiction.
\end{proof}

\medskip
Using these lemmas, we are now able to prove Theorem \ref{thm:thmgal}.

\medskip
\noindent
\emph{Proof of Theorem \ref{thm:thmgal}.} Set $n=\exp(G)$. We show that every sequence in $\cF(G)$ of length $\D(G)+2n-p^k-1$ contains a zero-sum subsequence of length $n$. Let $X$ be such a sequence and assume for a contradiction that $\reih{n}{X}=0$. Note that in particular, this implies $\reih{n}{J}=0$ for every $J\mid X$.

    Firstly, we may suppose that $n+1 \le \D(G)$. Indeed, according to the remark made right after inequality (\ref{trivialbounds}), the inequality $\D(G)\leq n$ would imply that $G$ is cyclic, in which case the result directly follows from Theorem \ref{ranktwo}. 
    
    Secondly, if we had $\reih{3n}{X}\neq0$, then we would be able to find $J\mid X$ with $|J|=3n$ and $\sigma(J)=0$ and so by Lemma \ref{Dubiner} (which can be applied as $\D(G)\leq2n$), we would have $\reih{n}{J}\neq0$, a contradiction. So, from now on, we may also suppose that $\reih{3n}{X}=0$.

    Since $|X|=\D(G)+2n-p^k-1\leq 4n-2p^k-1\leq4n-1-p^k$, we have $|X|\in\auf\D(G)+2n-1-p^k,\ 4n-1-p^k\zu$. Moreover, we have $2n+1-\D(G) \ge p^k+1$. Since we also have $\reih{n}{X}=0$ by assumption, we may apply Lemma \ref{ABC} with $\alpha=p^k$, which yields
    \begin{align}
        \reih{n-p^k}{X}\equiv\reih{3n-p^k}{X}\mod p\,.\label{eq:n-cvs3n-c}
    \end{align}

    We now consider two cases, depending on whether $n+p^k+1\leq\D(G)$ or not.

    \medskip
    
    Suppose first $n+p^k+1\leq\D(G)$. We have $\D(G)\leq2n$ and as already mentioned, $|X|\in\auf\D(G)+2n-1-p^k,4n-1-p^k\zu$. Therefore, we can apply Lemma \ref{binom} with $\gamma=p^k$. In addition, it follows from Theorem \ref{thm:Lucas} that $\binom{p^k}{i}\equiv0\mod p$ for all $i\in\interv{1}{p^k-1}$ . Taking this and the fact that $\reih{n}{X}=0$ into account, \eqref{eq:binomlemma} becomes
    \begin{align}
        0\equiv& \ 3-2\reih{n-p^k}{X}+
        \reih{2n}{X}+\reih{2n-p^k}{X}\mod p\,.\label{eq:321clean}
    \end{align}    
    Finally, since $\D(G)+2n-p^k-1 \ge \D(G)+n-1$, we have $|X|\in\auf\D(G)+n-1,\ 4n-1-p^k\zu$. Therefore, we may apply Corollary \ref{BSlemma} c) and d) and $\beta=p^k$, which gives
         \begin{align}
            1-\reih{n}{X}+\reih{2n}{X}-\reih{3n}{X}\equiv0\mod p\,,\label{eq:d2.1}
        \end{align}
       as well as
        \begin{align}
            \reih{n-p^k}{X}-\reih{2n-p^k}{X}+\reih{3n-p^k}{X}\equiv0\mod p\,.\label{eq:e2.1}
        \end{align}
Subtracting \eqref{eq:d2.1} and adding \eqref{eq:e2.1} to \eqref{eq:321clean}, then using the fact that $\reih{n}{X}=\reih{3n}{X}=0$, it follows from \eqref{eq:n-cvs3n-c} that
        \begin{align*}
        0\equiv& \ 2-\reih{n-p^k}{X}+\reih{3n-p^k}{X}\\
        \equiv& \ 2\mod p \,
    \end{align*}
    so that $p=2$, which is a contradiction. 
    
    \medskip

    Now, suppose that $\D(G)\leq n+p^k$. In this case, we have $|X|\in\auf\D(G)+2n-1-p^k,\ 3n-1\zu$, and since $n+1 \le \D(G)$ as well as $\reih{n}{X}=0$, we can apply Lemma \ref{binom_pte_dav} which gives 
      \begin{align}
        2-2\reih{n-p^k}{X}+\reih{2n-p^k}{X}\equiv0\mod p\,.\label{eq:221}
    \end{align}
    Finally, as in the previous case, we may apply Corollary \ref{BSlemma} d) for $\beta=p^k$ which, taking \eqref{eq:n-cvs3n-c} into account, gives us 
    \begin{align}
        2\reih{n-p^k}{X}\equiv\reih{2n-p^k}{X}\mod p.
    \end{align}
    Injecting this in \eqref{eq:221} gives 
      \begin{align*}
        2\equiv0\mod p\,,
    \end{align*}
    so that $p=2$, which is a contradiction.
    \pushQED{}\qed\popQED 

\medskip

Let us now prove the announced corollaries of Theorem \ref{thm:thmgal}.

\begin{proof}[Proof of Corollary \ref{cor:sbestimmen}]
    We set $n=\exp(G)$. Injecting $\D(G)=2n-p^k$ in the upper bound given by Theorem \ref{thm:thmgal}, we obtain
    \begin{align*}
        \s(G)\leq\D(G)+2n-p^k-1=2\D(G)-1.
    \end{align*}
The reverse inequality is just (\ref{boundsp}).
\end{proof}

\begin{proof}[Proof of Corollary \ref{cor:<}]
    We set $n=\exp(G)$. In view of Theorem \ref{thm:thmgal}, it suffices to show that there is no finite abelian $p$-group $G$ of exponent $n$ satisfying $2n-p <\D(G) < 2n-1$.
    To show this, note first that by Theorem \ref{Olson1}, we have
    \begin{align*}
        \D(G)=\sum_{i=1}^r(p^{a_i}-1) +1=\sum_{i=1}^r(p-1)s_i + 1,
    \end{align*}
    where $s_i=\displaystyle\sum_{j=0}^{a_i-1}p^j$ for every $i \in \interv{1}{r}$. Therefore,
    \begin{align*}
        \D(G)\equiv1\mod (p-1)\,.
    \end{align*}
    However, since $2n-p$ and $2n-1$ are consecutive elements in $1+(p-1)\mathbb{Z}$, it follows that $\interv{2n-p+1}{2n-2}$ contains no integer congruent to $1 \mod (p-1)$. 
\end{proof}

Let us now proceed with the proof of Theorem \ref{thm:pgpe+cycl} and Corollary \ref{cor:pgpe+cycl}. This is essentially the same proof as the one of Theorem $4.1$ in \cite{luo}, which uses the following two classical lemmas (see Corollary $4.2.13$ and Lemma $4.2.5$ in \cite{rusza}).

\begin{lemma}\label{lm:pgpeextended}
    Let $H$ be a finite abelian $p$-group such that $\D(H) \le 2\exp(H)-1$, and $a$ be a positive integer coprime to $p$. Then, $G \simeq H \oplus C_a$ satisfies
    \begin{align*}
        \D(G)=\D(H)+\exp(H)(a-1)\,.
    \end{align*}
\end{lemma}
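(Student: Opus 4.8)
I would prove the two inequalities $\D(G)\ge\D(H)+\exp(H)(a-1)$ and $\D(G)\le\D(H)+\exp(H)(a-1)$ separately. Write $n=\exp(H)$; since $H$ is a $p$-group and $p\nmid a$ we have $\gcd(n,a)=1$, so that if $n_1\mid\cdots\mid n_r=n$ are the invariant factors of $H$ then $G\simeq C_{n_1}\oplus\cdots\oplus C_{n_{r-1}}\oplus C_{n_r a}$. I would first dispatch the trivial cases $a=1$ (then $G=H$) and $H$ trivial. For the lower bound I would invoke only the general estimate $\D(G)\ge 1+\sum_i(d_i-1)$ over the invariant factors $d_i$ of $G$ — realised by the zero-sum-free sequence $e_1^{n_1-1}\cdots e_{r-1}^{n_{r-1}-1}e_r^{\,n_r a-1}$ along a basis — together with Olson's Theorem~\ref{Olson1}, which gives $\D(H)=1+\sum_i(n_i-1)$; these combine to $\D(G)\ge\D(H)+n(a-1)$.

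The substance is the upper bound, which I would obtain by the inductive method relative to the subgroup $C_a\le G$ (so $G/C_a\simeq H$), following the cited classical argument. Let $S$ be a sequence over $G$ of length $\D(H)+n(a-1)$; I want a non-empty zero-sum subsequence. The crucial input is Luo's Theorem~\ref{thm:Luo}, which applies to $H$ and yields $\eta(H)=2\D(H)-n\le\D(H)+n$, the inequality holding because $\D(H)\le 2n-1<2n$. Using this, I would greedily pull out pairwise disjoint subsequences $T_1,\dots,T_{a-1}$ of $S$ with $\sigma_H(T_j)=0$ and $1\le|T_j|\le n$: just before extracting $T_{j+1}$ (for $0\le j\le a-2$) the untouched part of $S$ has length at least $\D(H)+n(a-1)-jn\ge\D(H)+n\ge\eta(H)$, so such a $T_{j+1}$ exists by definition of $\eta$. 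After these $a-1$ extractions the remaining subsequence $R$ has length at least $\D(H)+n(a-1)-(a-1)n=\D(H)$, hence contains a non-empty subsequence $V$ with $\sigma_H(V)=0$.

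Now $\sigma(V)$ and each $\sigma(T_j)$ lie in $\{0\}\oplus C_a$, which I identify with $C_a$; put $c=\sigma(V)$ and $c_j=\sigma(T_j)$. If $c=0$ then $V$ itself works; if the sequence $c_1\cdots c_{a-1}$ over $C_a$ has a non-empty zero-sum subsequence indexed by $I$, then $\prod_{i\in I}T_i$ works. In the remaining case $c_1\cdots c_{a-1}$ is zero-sum-free of length $a-1=\D(C_a)-1$, so its prefix sums are $a-1$ distinct nonzero elements of $C_a$ and therefore exhaust $C_a\setminus\{0\}$; choosing $I$ with $\sum_{i\in I}c_i=-c$ (a non-empty index set, as $c\ne 0$), the product $V\cdot\prod_{i\in I}T_i$ is a non-empty zero-sum subsequence of $S$. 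This proves $\D(G)\le\D(H)+n(a-1)$, and together with the lower bound the lemma follows.

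The only real obstacle is the bookkeeping in the extraction step: one must simultaneously guarantee that $\ge\eta(H)$ terms always remain so that the next short zero-sum block in $H$ can be found, and that after removing $a-1$ blocks of length $\le n$ each there are still $\ge\D(H)$ terms left to house $V$. Both checks reduce to $\D(H)\le 2n$, and this — once through Luo's formula for $\eta(H)$ and once directly — is exactly where the rank-two-like hypothesis $\D(H)\le 2\exp(H)-1$ enters.
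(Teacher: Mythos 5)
The paper does not prove this lemma itself---it quotes it as classical, citing Corollary $4.2.13$ and Lemma $4.2.5$ of \cite{rusza}---so there is no internal proof to match; judged on its own, your argument is correct and complete. The lower bound via the canonical zero-sum-free sequence along a basis of $C_{n_1}\oplus\cdots\oplus C_{n_{r-1}}\oplus C_{n_r a}$ combined with Theorem \ref{Olson1} is exactly right, and the upper bound is the standard inductive method relative to the subgroup $C_a\le G$ with $G/C_a\simeq H$: your bookkeeping is sound, since both constraints (at least $\eta(H)$ terms remaining before each of the first $a-1$ extractions, at least $\D(H)$ remaining afterwards) reduce to $\D(H)+n\ge\eta(H)$, i.e.\ to $\D(H)\le 2n$, and the final step over $C_a$ is just the cyclic Davenport bound $\D(C_a)=a$ made explicit through prefix sums. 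The one methodological point worth flagging is your choice of input for the extraction step: you derive $\eta(H)\le\D(H)+n$ from Luo's exact evaluation $\eta(H)=2\D(H)-n$ (Theorem \ref{thm:Luo}). This is non-circular---Luo's theorem concerns the $p$-group $H$ alone and is proved independently of the present lemma---but it is a heavier and much more recent tool than the classical citation relies on; the cited proof gets by with the one-sided inequality $\eta(H)\le\D(H)+\exp(H)$ for rank-two-like $p$-groups, which is all your greedy extraction actually consumes. Within this paper, where Theorem \ref{thm:Luo} is already quoted, the shortcut is harmless.
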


\begin{lemma}\label{cor:quot}
    Let $G$ be a finite abelian group, and $L$ be a subgroup of $G$ such that $\exp(G)=\exp(L)\exp(G/ L)$. Then, 
    \begin{align}
        \s(G)\leq(\s(L)-1)\exp(G/L)+\s(G/L)\,.
    \end{align}
\end{lemma}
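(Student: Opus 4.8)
The plan is to use the standard fibering argument for $\s$ over a subgroup. Write $n=\exp(G)$, $m=\exp(L)$ and $k=\exp(G/L)$, so that the hypothesis reads $n=mk$. Let $\pi\colon G\to G/L$ denote the canonical projection, and let $S\in\cF(G)$ be an arbitrary sequence of length $|S|=(\s(L)-1)k+\s(G/L)$. The goal is to produce a subsequence $T\mid S$ with $|T|=n$ and $\sigma(T)=0$.

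First I would repeatedly peel off short subsequences that are zero-sum modulo $L$. Since $|S|\ge\s(G/L)$, the image $\pi(S)\in\cF(G/L)$ contains a subsequence of length $k=\exp(G/L)$ summing to $0$ in $G/L$; pulling this back gives $T_1\mid S$ with $|T_1|=k$ and $\sigma(T_1)\in L$. Remove $T_1$ from $S$ and repeat the argument on what remains. After extracting $j$ such blocks, the remaining length is $(\s(L)-1-j)k+\s(G/L)$, which is still $\ge\s(G/L)$ as long as $j\le\s(L)-1$. Hence one obtains pairwise disjoint subsequences $T_1,\dots,T_{\s(L)}\mid S$, each of length $k$, each satisfying $\sigma(T_i)\in L$.

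Next I would apply the definition of $\s(L)$ to the auxiliary sequence $\sigma(T_1)\cdots\sigma(T_{\s(L)})\in\cF(L)$, which has length $\s(L)$: it contains a subsequence of length $\exp(L)=m$ summing to $0$ in $L$, say $\sigma(T_{i_1})+\dots+\sigma(T_{i_m})=0$ after relabelling so that $\{i_1,\dots,i_m\}=\interv{1}{m}$. Then $T=T_1\cdots T_m$ is a subsequence of $S$ with $\sigma(T)=\sum_{i=1}^m\sigma(T_i)=0$ in $G$ and $|T|=mk=n$, which is exactly what we want.

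The only genuinely delicate point is the bookkeeping: one must make sure the blocks $T_i$ are extracted from pairwise disjoint parts of $S$ (so that $T_1\cdots T_m$ is really a subsequence of $S$), and that the length inequality $(\s(L)-1)k+\s(G/L)-jk\ge\s(G/L)$ holds at every step $j\le\s(L)-1$ — this is precisely why the length $(\s(L)-1)k+\s(G/L)$ appears in the statement. The hypothesis $\exp(G)=\exp(L)\exp(G/L)$ enters exactly once, to guarantee $|T|=mk=n$; without it the constructed zero-sum subsequence would have the wrong length, so I do not expect any further obstacle.
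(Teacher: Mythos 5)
Your proof is correct and is precisely the standard fibering argument behind this lemma; the paper does not prove it but simply cites it as a classical result (Lemma~4.2.5 in the Geroldinger reference), whose proof is the same block-extraction you describe. The bookkeeping you flag is handled correctly: after removing $j\le\s(L)-1$ blocks of length $k$ the remainder still has length at least $\s(G/L)$, so all $\s(L)$ blocks exist and the final zero-sum subsequence has length $mk=\exp(G)$ as required.
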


\medskip
\noindent
\emph{Proof of Theorem \ref{thm:pgpe+cycl} and Corollary \ref{cor:pgpe+cycl}.}
    Let us set $n=\exp(H)$ and $L=C_a$. We have $G/L \simeq H$, so that $\exp(G/L)=\exp(H)=n$, as well as $\exp(L)=a$ and $\exp(G)=an$. Thus, Lemma \ref{cor:quot} applies and gives
    \begin{align*}
        \s(G) \leq n(\s(L)-1)+\s(G/L).
    \end{align*}
    Now, it follows from the Erd\H{o}s-Ginzburg-Ziv theorem (Theorem \ref{ranktwo} with $m=1$ and $n=a$) that $\s(C_a)=2a-1$, and from Theorem \ref{thm:thmgal} that $\s(G/L) = s(H) \le \D(H)+2n-p^k-1$. Therefore,
    \begin{align*}
        \s(G)& \leq n(2a-2)+\D(H)+2n-p^k-1\,\\
        & =\D(H)+2an-p^k-1,
    \end{align*}
    which is the upper bound claimed in Theorem \ref{thm:pgpe+cycl}.
    
    Moreover, if $\D(H)=2n-p^k$, and since $\D(G)=\D(H)+n(a-1)$ by Lemma \ref{lm:pgpeextended}, it is easily checked that the upper bound we just obtained coincides with $2\D(G)-1$. Finally, writing $H=K\oplus\bZ_n$, we have $G \simeq K \oplus C_{na}$ and it follows from Lemma $3.2$ in \cite{edel} that $\s(G)\geq 2(\D(K)-1)+2an-1$. By Theorem \ref{Olson1}, we have $\D(K)=\D(H)-n+1$, whence $\s(G)\geq 2(\D(H)-n)+2an-1=2\D(G)-1$. This completes the proof of Corollary \ref{cor:pgpe+cycl}.
\pushQED{}\qed\popQED

\section{Proof of Theorem \ref{thm:thmc}}
\label{extraresult}

In this section, we push the method we used to prove Theorem \ref{thm:thmgal} a little further. From a technical point of view, the argument is more involved, but once again, Corollary \ref{BSlemma} and Lemma \ref{ABC} will complement each other in order to give the desired result.

\begin{proof}[Proof of Theorem \ref{thm:thmc}]
    Set $n=\exp(G)$. First of all, since $c=n$ gives $\D(G)=n$ in which case $G$ is cyclic and satisfies the claimed upper bound by the Erd\H{o}s-Ginzburg-Ziv theorem, we may assume that $c<n$. 
    In addition, since $p$ is odd and $\D(G) \equiv 1 \mod (p-1)$ also, it follows that $n$ and $c$ are odd too. Now, let us set $c'=(\frac{c-1}{2})+1$, consider a sequence $X \in \cF(G)$ of length 
    $|X| = \D(G)+2n-\left(\frac{c-1}{2}\right)-2=\D(G)+2n-c'-1$, and assume for a contradiction that $\reih{n}{X}=0$. 
    
    Let $J \mid X$ be any subsequence of length $|J|=|X|-n=\D(G)+n-c'-1$. On the one hand, since $\D(G)+n-c'-1\leq3n-1-c'$, Corollary \ref{BSlemma} a) with $\gamma=c'$ applies to $J$ and yields
    \begin{equation}\label{5dot0}\tag{$E_0$}
0 \equiv
\,  1-\left(\sum_{i=0}^{c'}\binom{c'}{i}\reih{n-i}{J}\right)+\left(\sum_{i=0}^{c'}\binom{c'}{i}\reih{2n-i}{J}\right)\mod p\,, 
\end{equation}
On the other hand, since $\D(G)+n-1-c'=3n-1-c'-c\leq3n-1-c'-(c'-1)$, Corollary \ref{BSlemma} b) with $\gamma=c'$ and any $\beta\in\interv{1}{c'-1}$ applies to $J$ and gives,
\begin{equation}\label{5dotj}\tag{$E_j$}
0 \equiv -\left(\sum_{i=0}^{c'}\binom{c'}{i}\reih{n-i-j}{J}\right)+\left(\sum_{i=0}^{c'}\binom{c'}{i}\reih{2n-i-j}{J}\right) \mod p\,, 
\end{equation}
 for every $j\in\interv{1}{c'-1}$.

\medskip
These $c'$ equations (\ref{5dot0}) and (\ref{5dotj}), for $j=1,\dots,c'-1$, correspond to a linear system $AX=0$ over $\mathbb{F}_p$, which is satisfied by the vector $X_J=(\reih{0}{J},\dots,\reih{2n}{J})^{\mathsf{T}}$. In particular, $A$ can be seen as a matrix indexed by $\interv{1}{c'} \times \interv{0}{2n}$. Now, for every $L \subseteq \interv{0}{2n}$, let us write $A_L$ for the submatrix of $A$ obtained from $A$ by deleting all columns $C_{\ell}$ such that $\ell \notin L$.

Now note that, since $c < n$, one has $n < 2n-2c'+1$. Let us also set
$$\begin{tikzpicture}
    \matrix[
      matrix of math nodes,
      left delimiter=(,
      right delimiter=),
      nodes in empty cells
    ] (m){
      {c' \choose c'-1}     & {c' \choose c'-2} & \cdots  & \cdots & {c' \choose 1}    \\
{c' \choose c'-2}     &    &  & {c' \choose 1}  & 1     \\
\vdots     &     & &       & 0     \\
\vdots   &    &     &   &   \\
{c' \choose 1}  &   1  &   &  &    \\
1  & 0  & &     & 0     \\
    };
    \draw (m-5-1) -- (m-2-4);
        \draw (m-6-2) -- (m-3-5);
    \draw (-0.35,-1.05) -- (1.85,0.725);
     \draw[fill] (0.79,1.91) circle (0.415pt);
          \draw[fill] (-1.905,-0.1825) circle (0.415pt);
               \draw[fill] (-1.905,-0.357) circle (0.415pt);
               \draw[fill,white] (-1.905,-0.82) circle (1pt);
              \draw (-3,0) node [anchor=east]  {$B=$};
    \draw (m-3-5) -- (m-6-5);
    \draw (m-6-2) -- (m-6-5);
  \end{tikzpicture}
$$
It is readily seen that $A_{\interv{n-c'+1}{n-1}}=-B$ and $A_{\interv{2n-c'+1}{2n-1}}=B$. It is all the more easy to check that the matrix $B^\mathsf{T}$ has size $(c'-1) \times c'$ and rank $c'-1$. Indeed, the $(c'-1) \times (c'-1)$ submatrix obtained from $B^\mathsf{T}$ by deleting its first column is invertible. Therefore, it follows from the rank-nullity theorem that there exists $(\lambda_1,\dots,\lambda_{c'}) \in \mathbb{F}^{c'}_p$ such that $\lambda_1 \neq 0$ and $(\lambda_1,\dots,\lambda_{c'})B^\mathsf{T}=0$.

\medskip
Multiplying both sides of the equality $AX_J=0$ to the left by $(\lambda_1,\dots,\lambda_{c'})$ yields a new identity of the form
    \begin{align}
        0 \ \equiv \ &1-a_{2c'-1}\reih{n-2c'+1}{J}-\dots-a_{c'}\reih{n-c'}{J}-\reih{n}{J}\notag\\
        &+a_{2c'-1}\reih{2n-2c'+1}{J}+\dots+a_{c'}\reih{2n-c'}{J}+\reih{2n}{J}\notag\\
        &\hspace{200pt}\mod p\,.\label{eq:eqnextrathm2}
    \end{align}
    for some coefficients $a_{c'},\dots,a_{2c'-1}$ in $\mathbb{F}_p$.

\medskip
    Summing up \eqref{eq:eqnextrathm2} over all subsequences $J\mid X$ of length $|J|=|X|-n$ and finally taking into account that $\reih{n}{X}=0$ gives
    \begin{align}
        0\equiv \ &\binom{|X|}{n}-\left(\sum_{i=c'}^{2c'-1}a_i\binom{|X|-n+i}{n}\reih{n-i}{X}+\binom{|X|-n}{n}\reih{n}{X}\right)\notag\\
        & +\left(\sum_{i=c'}^{2c'-1}a_i\binom{|X|-2n+i}{n}\reih{2n-i}{X}+\binom{|X|-2n}{n}\reih{2n}{X}\right)\notag\\
        =& \ \binom{|X|}{n}-\left(\sum_{i=c'}^{2c'-1}a_i\binom{|X|-n+i}{n}\reih{n-i}{X}\right)\notag\\
        &+\left(\sum_{i=c'}^{2c'-1}a_i\binom{|X|-2n+i}{n}\reih{2n-i}{X}+\binom{|X|-2n}{n}\reih{2n}{X}\right)\label{eq:binomextrathm}\,.
    \end{align}
    Now, consider two cases: If $c+c'+1\le n$, we have $|X|=4n-c-c'-1\in\interv{3n}{4n-1}$ (whence we have of course $|X|-n\in\interv{2n}{3n-1}$ and 
    $|X|-2n\in\interv{n}{2n-1}$). So by Remark \ref{rq:binommodp}, we have $\binom{|X|}{n}\equiv3\mod p$
    and $\binom{|X|-2n}{n}\equiv1\mod p$. 

    \medskip
    If $c+c'\ge n$, then we have $|X|=4n-c-c'-1\in\interv{2n}{3n-1}$ (as clearly $c+c'\leq2n-1$) and so by Remark \ref{rq:binommodp} we have $\binom{|X|}{n}\equiv2\mod p$ and $\binom{|X|-2n}{n}\equiv0\mod p$.

    \medskip
     On the other hand, and since $c\leq n-1$, we have in both cases that $2n\leq3n-c-1=|X|-n+c'\leq|X|-n+i\leq|X|-n+2c'-1=3n-c+c'-2\le3n-1$ for all $i\in\interv{c'}{2c'-1}$, so again by Remark \ref{rq:binommodp}, $\binom{|X|-n+i}{n}\equiv2\mod p$ and  $\binom{|X|-2n+i}{n}\equiv1\mod p$. Therefore, \eqref{eq:binomextrathm} gives the equation 
    \begin{align}
        0\equiv \ 3-\left(\sum_{i=c'}^{2c'+1}2a_i\reih{n-i}{X}\right)+\left(\reih{2n}{X}+\sum_{i=c'}^{2c'+1}a_i\reih{2n-i}{X}\right)\label{eq:binomextrathmcas1}
    \end{align}
    in the first case and the equation
    \begin{align}
        0\equiv \ 2-\left(\sum_{i=c'}^{2c'+1}2a_i\reih{n-i}{X}\right)+\left(\sum_{i=c'}^{2c'+1}a_i\reih{2n-i}{X}\right)\label{eq:binomextrathmcas2}
    \end{align}
    in the second case.

    \medskip

    We now want to apply Lemma \ref{ABC} to every $\alpha=i\in\interv{c'}{2c'-1}$. Let us fix such an $i$. By definition of $c'$, we have $i\leq2c'-1\leq c+1=2n+1-\D(G)$, and so it suffices to check that $|X|=\D(G)+2n-c'-1\in\auf\D(G)+2n-i-1,4n-i-1\zu$ to have the hypotheses of the lemma verified. We do have indeed $\D(G)+2n-c'-1\in\auf\D(G)+2n-c'-1,4n-(2c'-1)-1\zu$ (and this interval is contained in the desired one, by definition of $i$), as by definition of $c'$ we have $\D(G)=2n-c\leq2n-c'+1$, so we can actually apply the lemma which gives us
    \begin{align}
        \reih{n-i}{X}\equiv\reih{3n-i}{X}\mod p\,.\label{eq:n-i,3n-i}
    \end{align} 
    Moreover, as we have $|X|=\D(G)+2n-c'-1\in\auf\D(G)+n-1,4n-1-i\zu$ (this is true because of $c'\leq n$ and $\D(G)+2n-c'-1=4n-c-c'-1\leq4n-1-(2c'-1)$, which is again true in view of $c'-1\leq c$), we may also apply Corollary \ref{BSlemma} d) for $\beta=i$, whence we have, together with \eqref{eq:n-i,3n-i}, 
    \begin{align}
        2\reih{n-i}{X}\equiv\reih{2n-i}{X}\mod p\,.\label{eq:n-i,2n-i}
    \end{align}

    In the first case, injecting \eqref{eq:n-i,2n-i} for each $i\in\interv{c'}{2c'-1}$ in \eqref{eq:binomextrathmcas1} gives 
    \begin{align*}
        \reih{2n}{X}\equiv-3\mod p\,.
    \end{align*}
    Yet, on the other hand, one has by Corollary \ref{BSlemma} c) (which may be applied, as we already saw that $|X|\in\auf\D(G)+n-1,4n-1\zu$) and Lemma \ref{Dubiner} 
    \begin{align*}
        \reih{2n}{X}\equiv-1\mod p\,,
    \end{align*}
    whence we deduce
    \begin{align*}
        0\equiv2\mod p\,,
    \end{align*}
    so that $p=2$, which is a contradiction.
    
    \medskip
    
    In the second case, injecting \eqref{eq:n-i,2n-i} for each $i\in\interv{c'}{2c'-1}$ in \eqref{eq:binomextrathmcas2} yields
    \begin{align*}
        0\equiv2\mod p,
    \end{align*}
    so that $p=2$, which is a contradiction again.
\end{proof}

\section{Concluding remarks}
\label{conclu}

Over the years, many variants of the Erd\H{o}s-Ginzburg-Ziv constant have been introduced and studied. For instance, given any finite abelian group $G$ of exponent $n$, one can consider the invariant $\s_{\interv{j}{n}}(G)$, for every $j\in\interv{1}{n}$. 

\medskip
Note that this quantity acts as a common generalization of $\eta(G)=\s_{\interv{1}{n}}(G)$ and $\s(G)=\s_{\interv{n}{n}}(G)$. In addition, Luo observed (see Section $5$ in \cite{luo}) that the sequence $(\s_{\interv{j}{n}}(G))_{j \in \interv{1}{n}}$ 
is strictly increasing. This fact has the following consequence.

\begin{corollary}\label{sj}
    Let $G$ be a finite abelian group of exponent $n$. Then, for every $j\in\interv{1}{n}$, one has $$s_{\interv{j}{n}}(G)\in\interv{\eta(G)+j-1}{\s(G)-n+j}.$$ 
    In particular, if $G$ satisfies Conjecture \ref{ConjGao}, then for every $j\in\interv{1}{n}$, one has
    $$s_{\interv{j}{n}}(G)=\eta(G)+j-1\,.$$
\end{corollary}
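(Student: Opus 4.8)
The plan is to leverage the strict monotonicity of the map $j \mapsto \s_{\interv{j}{n}}(G)$ on $\interv{1}{n}$, recalled just above, together with the two boundary identifications $\s_{\interv{1}{n}}(G) = \eta(G)$ and $\s_{\interv{n}{n}}(G) = \s(G)$.

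First, I would record that $\bigl(\s_{\interv{j}{n}}(G)\bigr)_{j \in \interv{1}{n}}$ is a strictly increasing finite sequence of positive integers: it is finite because any zero-sum subsequence of length $n$ has length lying in $\interv{j}{n}$ for every $j \in \interv{1}{n}$, so that $\s_{\interv{j}{n}}(G) \le \s(G)$ for all such $j$; and it is strictly increasing by Luo's observation. Being strictly increasing and integer-valued, its terms satisfy $\s_{\interv{j}{n}}(G) \ge \s_{\interv{i}{n}}(G) + (j-i)$ for all $1 \le i \le j \le n$. Taking $i = 1$ gives $\s_{\interv{j}{n}}(G) \ge \eta(G) + j - 1$, which is the lower bound; taking $j = n$ gives $\s(G) = \s_{\interv{n}{n}}(G) \ge \s_{\interv{j}{n}}(G) + (n-j)$, that is $\s_{\interv{j}{n}}(G) \le \s(G) - n + j$, which is the upper bound. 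This yields the claimed containment $\s_{\interv{j}{n}}(G) \in \interv{\eta(G)+j-1}{\s(G)-n+j}$.

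For the second assertion, I would substitute the equality of Conjecture \ref{ConjGao}, namely $\s(G) = \eta(G) + n - 1$, into the right endpoint of this interval: it becomes $\s(G) - n + j = \eta(G) + j - 1$, so the interval reduces to the single integer $\eta(G) + j - 1$, forcing $\s_{\interv{j}{n}}(G) = \eta(G) + j - 1$ for every $j \in \interv{1}{n}$.

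There is essentially no obstacle here: the argument is a short pigeonhole-type squeeze of $\s_{\interv{j}{n}}(G)$ between the two known endpoint values $\eta(G) = \s_{\interv{1}{n}}(G)$ and $\s(G) = \s_{\interv{n}{n}}(G)$, and the only input beyond bookkeeping is Luo's strict-monotonicity statement, which we take as given; one should only be slightly careful to phrase the increment estimate for non-consecutive indices correctly, but this is immediate from integrality.
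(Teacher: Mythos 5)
Your proposal is correct and follows essentially the same route as the paper: both rest on Luo's strict monotonicity of $j \mapsto \s_{\interv{j}{n}}(G)$, the endpoint identifications $\s_{\interv{1}{n}}(G)=\eta(G)$ and $\s_{\interv{n}{n}}(G)=\s(G)$, and the resulting telescoping chain of inequalities to squeeze the intermediate values. The substitution of Conjecture \ref{ConjGao} to collapse the interval is exactly how the paper concludes as well.
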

\begin{proof}
    Since $(\s_{\interv{j}{n}}(G))_{j \in \interv{1}{n}}$ is strictly increasing, one has the inequalities
    \begin{align*}
        \s(G)&=\s_{\interv{n}{n}}(G)\\
        &\geq\s_{\interv{n-1}{n}}(G)+1\geq\s_{\interv{n-2}{n}}(G)+2\geq\cdots\geq\s_{\interv{1}{n}}(G)+n-1\\
        &=\eta(G)+n-1\,,
    \end{align*}
    from which the claimed result directly follows.
\end{proof}
Combining Corollary \ref{sj} with Theorem \ref{thm:pgpe+cycl} yields the following general result.

\begin{theorem}\label{thm:thmgalgal}
Let $p \ge 3$ be a prime. Let also $H$ be a finite abelian $p$-group of exponent $n$ such that $\D(H) \le 2n-p^k$ for some prime power $1 \le p^k \le n$, and $a$ be a positive integer coprime to $p$. Then, the group $G \simeq H\oplus\bZ_{a}$ satisfies $\exp(G)=an$ and, for every $j\in\interv{1}{an}$, one has 
    \begin{align*}
        2\D(G)-an+j-1\leq\s_{\interv{j}{an}}(G)\leq\D(H)+an-p^k+j-1\,.
    \end{align*} 
\end{theorem}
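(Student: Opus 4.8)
The plan is to read off both inequalities from Corollary~\ref{sj} applied to $G$ itself, fed with what is already known about $\s(G)$. Since $H$ is a finite abelian $p$-group of exponent $n$ (a power of $p$) and $a$ is coprime to $p$, one has $\gcd(a,n)=1$, so $\exp(G)=an$; moreover $\D(H)\le 2n-p^k\le 2n-1$, so Lemma~\ref{lm:pgpeextended} applies and gives $\D(G)=\D(H)+n(a-1)$.

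For the upper bound, Theorem~\ref{thm:pgpe+cycl} gives $\s(G)\le\D(H)+2an-p^k-1$, and then the right-hand inequality of Corollary~\ref{sj}, namely $\s_{\interv{j}{an}}(G)\le\s(G)-an+j$, yields precisely $\s_{\interv{j}{an}}(G)\le\D(H)+an-p^k+j-1$.

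For the lower bound, the left-hand inequality of Corollary~\ref{sj} reads $\s_{\interv{j}{an}}(G)\ge\eta(G)+j-1$, so it is enough to prove $\eta(G)\ge 2\D(G)-an$. Writing $H\simeq K\oplus\bZ_n$, so that $G\simeq K\oplus\bZ_{an}$ and $\D(K)=\D(H)-n+1$ by Theorem~\ref{Olson1}, this amounts to $\eta(G)\ge 2\D(K)+an-2$, and I would prove it by displaying an extremal sequence. Fix a basis $e_1,\dots,e_{r-1}$ of $K$ adapted to a decomposition $K\simeq\bZ_{n_1}\oplus\cdots\oplus\bZ_{n_{r-1}}$ with $n_1\mid\cdots\mid n_{r-1}$, and a generator $f$ of the $\bZ_{an}$-factor, and set
\[
W=\Bigl(\prod_{i=1}^{r-1}e_i^{\,n_i-1}\Bigr)\Bigl(\prod_{i=1}^{r-1}(e_i+f)^{\,n_i-1}\Bigr)f^{\,an-1}\in\cF(G),
\]
a sequence of length $2(\D(K)-1)+(an-1)=2\D(K)+an-3=2\D(G)-an-1$. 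The verification that $W$ has no nonempty zero-sum subsequence of length at most $an=\exp(G)$ is a short elementary computation: recording in an arbitrary subsequence the multiplicities $\alpha_i$ of $e_i$, $\beta_i$ of $e_i+f$ and $\gamma$ of $f$, the subsequence is zero-sum exactly when $\alpha_i+\beta_i\equiv0\pmod{n_i}$ for every $i$ and $\sum_i\beta_i+\gamma\equiv0\pmod{an}$; since $\alpha_i+\beta_i\le 2(n_i-1)<2n_i$ one gets $\alpha_i+\beta_i\in\{0,n_i\}$, and since $\sum_i\beta_i+\gamma\le(\D(K)-1)+(an-1)<2an$ (here one uses $\D(K)\le n\le an$) one gets $\sum_i\beta_i+\gamma\in\{0,an\}$; a nonempty such subsequence therefore has at least one index with $\alpha_i+\beta_i=n_i$, which forces $\sum_i\beta_i+\gamma=an$, and then its length $\sum_i(\alpha_i+\beta_i)+\gamma$ exceeds $an$. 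Hence $\eta(G)\ge|W|+1=2\D(G)-an$, which completes the proof.

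The main obstacle is the lower bound, that is, producing and checking $W$; once $\eta(G)\ge 2\D(G)-\exp(G)$ is available, the rest is routine bookkeeping with Corollary~\ref{sj}. The subtlety in the construction lies in the \emph{diagonal} elements $e_i+f$: using instead a second plain copy of the $e_i$ would create zero-sum subsequences of length $n_i\le n\le an$, whereas shifting them into the $\bZ_{an}$-direction forces every nontrivial zero-sum to have length at least $an+1$. This is also the step where the hypothesis $\D(H)\le 2n-1$, equivalently $\D(K)\le n$, is used, namely to pin down the multiplicity of $f$. (Alternatively, the inequality $\eta(G)\ge 2\D(G)-\exp(G)$ can be obtained from Theorem~\ref{thm:Luo} together with a standard lower bound for the $\eta$-invariant of a direct product with a coprime cyclic factor.)
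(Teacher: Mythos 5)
Your argument is correct and takes essentially the same route as the paper, which obtains the theorem by combining Corollary~\ref{sj} with Theorem~\ref{thm:pgpe+cycl} for the upper bound and with the inequality $\eta(G)\ge 2\D(G)-\exp(G)$ for the lower bound. The only difference is that you re-derive that lower bound by exhibiting the extremal sequence $W$ (your verification is correct), whereas the paper simply cites the classical Lemma~$3.2$ of \cite{edel}, of which your construction is precisely the standard proof.
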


In particular, Theorem \ref{thm:thmgalgal} and Corollary \ref{cor:pgpe+cycl} give the exact value of all quantities $s_{\interv{j}{n}}(G)$ whenever $G$ is a direct product of a finite abelian $p$-group $H$ such that $\D(H)=2n-p^k$ for some prime power $1 \le p^k \le n$ with a cyclic group of order coprime to $p$. 

\begin{corollary}\label{corogal}
        Let $p \ge 3$ be a prime. Let also $H$ be a finite abelian $p$-group of exponent $n$ such that $\D(H) = 2n-p^k$ for some prime power $1 \le p^k \le n$, and $a$ be a positive integer coprime to $p$. Then, the group $G \simeq H\oplus\bZ_{a}$ satisfies $\exp(G)=an$ and, for every $j\in\interv{1}{an}$, one has
    $$s_{\interv{j}{an}}(G)=2\D(G)-an+j-1\,.$$
\end{corollary}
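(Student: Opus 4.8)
The plan is to sandwich $\s_{\interv{j}{an}}(G)$ between the two bounds provided by Theorem \ref{thm:thmgalgal}, and then to observe that under the sharpened hypothesis $\D(H)=2n-p^k$ these two bounds in fact coincide, collapsing the interval to a single value.

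First I would invoke Theorem \ref{thm:thmgalgal}, whose hypotheses hold here since $\D(H)=2n-p^k$ (so in particular $\D(H)\le 2n-p^k$): this gives, for every $j\in\interv{1}{an}$, the chain
\begin{align*}
2\D(G)-an+j-1 \ \le \ \s_{\interv{j}{an}}(G) \ \le \ \D(H)+an-p^k+j-1 .
\end{align*}
Next I would compute $\D(G)$ explicitly. Since $\D(H)=2n-p^k\le 2n-1=2\exp(H)-1$ (because $p^k\ge 1$), Lemma \ref{lm:pgpeextended} applies and yields
\begin{align*}
\D(G)=\D(H)+\exp(H)(a-1)=(2n-p^k)+n(a-1)=n(a+1)-p^k .
\end{align*}
Plugging this into the left-hand term gives $2\D(G)-an+j-1=an+2n-2p^k+j-1$, while plugging $\D(H)=2n-p^k$ into the right-hand term gives $\D(H)+an-p^k+j-1=an+2n-2p^k+j-1$ as well. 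Hence the two outer terms of the chain are equal, which forces $\s_{\interv{j}{an}}(G)=2\D(G)-an+j-1$ for every $j\in\interv{1}{an}$, the claimed formula. The assertion $\exp(G)=an$ is immediate from $\exp(H)=n$ and $\gcd(a,p)=1$; and the case $j=an$ recovers $\s(G)=2\D(G)-1$, consistent with Corollary \ref{cor:pgpe+cycl}.

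There is no genuine obstacle here: the whole argument is a substitution, since all the combinatorial content already sits inside Theorem \ref{thm:thmgalgal} (itself obtained by combining Corollary \ref{sj} and Theorem \ref{thm:pgpe+cycl}) and inside Lemma \ref{lm:pgpeextended}. The only thing to watch is that the hypotheses of the two ingredients line up correctly — Lemma \ref{lm:pgpeextended} needs $\D(H)\le 2\exp(H)-1$ and Theorem \ref{thm:thmgalgal} needs $\D(H)\le 2\exp(H)-p^k$ — both of which are guaranteed by $p^k\ge 1$ together with the equality $\D(H)=2n-p^k$.
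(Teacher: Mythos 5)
Your argument is correct and is essentially the proof the paper intends: the paper derives Corollary \ref{corogal} by combining Theorem \ref{thm:thmgalgal} with the computation of $\D(G)$ via Lemma \ref{lm:pgpeextended} (done inside the proof of Corollary \ref{cor:pgpe+cycl}), and your substitution showing that the two bounds $2\D(G)-an+j-1$ and $\D(H)+an-p^k+j-1$ coincide when $\D(H)=2n-p^k$ is exactly the point. The hypothesis checks you flag (that $p^k\ge 1$ guarantees both $\D(H)\le 2\exp(H)-1$ and $\D(H)\le 2\exp(H)-p^k$) are the right ones and are handled correctly.
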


In the special case of $p$-groups, Corollary \ref{corogal} is consistent with the following conjecture of Luo, bearing upon the structure of long sequences over rank-two-like $p$-groups (see Conjecture 5.4 in \cite{luo}). 

\begin{conjecture}
    Let $p$ be a prime, and $G$ be a finite abelian $p$-group of exponent $n$ such that $\D(G)\leq2n-1$. Let also $\ell\in\interv{1}{\D(G)+1-n}$. If $S\in\cF(G)$ is a sequence of length $|S| \ge \D(G)+n-2+\ell$, then one of the following two statements holds.
    \begin{enumerate}
        \item[$(i)$] $S$ contains a zero-sum subsequence of length $n$.
        \item[$(ii)$] $S$ contains a zero-sum subsequence $T \mid S$ of length $2n$ containing itself a zero-sum subsequence $U \mid T$ of length $|U| \in\interv{2n-1-\D(G)+\ell}{n-1}$.
    \end{enumerate}
\end{conjecture}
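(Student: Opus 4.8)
The plan is to argue by contraposition: assuming statement $(i)$ fails, so that $\reih{n}{S}=0$ (hence $\reih{n}{S'}=0$ for every $S'\mid S$), I would construct the nested pair required in $(ii)$. We may assume $G$ is non-cyclic, i.e. $\D(G)\ge n+1$, the cyclic case being settled by the Erd\H{o}s-Ginzburg-Ziv theorem. The backbone is a double-counting identity: for each integer $m$, let $N_m$ be the number of pairs $U\mid W\mid S$ with $\sigma(U)=\sigma(W)=0$, $|U|=m$ and $|W|=2n$. Counting first by $W$ and then by $U$ gives $N_m=\sum_{W}\reih{m}{W}=\sum_{U}\reih{2n-m}{SU^{-1}}$, where the first sum runs over zero-sum $W\mid S$ of length $2n$ and the second over zero-sum $U\mid S$ of length $m$. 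Establishing $(ii)$ then reduces to showing $\sum_{m}N_m\ge 1$, with $m$ ranging over $\interv{2n-1-\D(G)+\ell}{n-1}$; and for this it suffices to prove $\sum_m N_m\not\equiv 0\mod p$.

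First I would secure a single zero-sum subsequence $T\mid S$ of length $2n$ to work inside. Since $\reih{n}{S}=0$ and $|S|\ge\D(G)+n-1$, passing to a subsequence of length $\min(|S|,3n-1)\in\interv{\D(G)+n-1}{3n-1}$ and applying Corollary \ref{cor} yields $\reih{2n}{\cdot}\equiv-1\mod p$, so such a $T$ exists and satisfies $\reih{n}{T}=0$. Next I would extract information from $T$ by applying Lemma \ref{lm:lmdebase} to $J=T$ with $k=3$, $\beta=0$ and $\gamma=\D(G)-n-1$, a choice for which $|T|=2n$ sits exactly at the lower end $\D(G)+n-1-\gamma$ of the admissible window. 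Using $\reih{n}{T}=0$ together with the complementation identity $\reih{i}{T}=\reih{2n-i}{T}$ (valid since $T$ is zero-sum of length $2n$), the resulting congruence collapses to $2-\sum_{i\ge1}\binom{\gamma}{i}\reih{n-i}{T}+\sum_{i\ge1}\binom{\gamma}{i}\reih{i}{T}\equiv0\mod p$. If $(ii)$ also fails, every $\reih{m}{T}$ with $m$ in the target interval vanishes, so the terms of the middle sum indexed by such lengths drop out; what then remains to control is the counts $\reih{i}{T}$ of the \emph{short} zero-sum subsequences of $T$, of length $i\in\interv{1}{\D(G)-n-1}$, for which I would again try to exploit Corollary \ref{BSlemma} and Lemma \ref{ABC} exactly as in the proofs of Theorems \ref{thm:thmgal} and \ref{thm:thmc}.

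To manage the dependence on $\ell$, I would run a downward induction. If $|S|\ge\D(G)+n-1+\ell$, the statement for $\ell+1$ already produces a nested pair with $|U|\in\interv{2n-\D(G)+\ell}{n-1}$, an interval contained in the one required for $\ell$; so the induction reduces the problem to the tight boundary case $|S|=\D(G)+n-2+\ell$ together with the top endpoint $\ell=\D(G)+1-n$. At that endpoint the target interval $\interv{2n-1-\D(G)+\ell}{n-1}=\interv{n}{n-1}$ is empty, so $(ii)$ is vacuous and the assertion is precisely $\s(G)=2\D(G)-1$, which is consistent with — and in the cases $\D(G)=2n-p^k$ follows from — Corollary \ref{cor:sbestimmen}.

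The hard part will be the tight boundary cases for intermediate $\ell$, and I expect the obstruction to be genuine. The Baker-Schmidt congruences of Corollary \ref{BSlemma} only control zero-sum counts in fixed windows around the lengths $0$, $n$ and $2n$, whereas the forbidden interval $\interv{2n-1-\D(G)+\ell}{n-1}$ slides with $\ell$ and, once $\ell$ is not small, decouples from those windows; at the same time, when one passes to a complement $SU^{-1}$ its length drops below $\D(G)+n-1$ and the congruences no longer apply. Concretely, the analysis of $T$ above leaves exactly the short zero-sum counts $\reih{i}{T}$ unaccounted for, precisely the quantities the lemmas cannot see. I therefore expect that closing the gap requires input beyond the counting method of this paper — most plausibly a Savchev-Chen-type structural description of the sequences over a rank-two-like $p$-group avoiding a zero-sum subsequence of length $n$, from which the location of the short zero-sum subsequences inside a length-$2n$ zero-sum could be read off directly. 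This is why the statement is recorded here as a conjecture rather than a theorem.
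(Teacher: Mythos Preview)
Your assessment is correct: the paper does not prove this statement. It is recorded there as a conjecture of Luo (Conjecture~5.4 in \cite{luo}), and the paper explicitly notes that it is currently known only in two special situations --- for $\ell=1$ (Theorem~5.2 in \cite{luo}) and for all $\ell$ under the stronger hypothesis $\D(G)=2n-1$ (Theorem~3.1.2 in \cite{zhuang}). There is therefore no proof in the paper to compare your proposal against.

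Your outline is an honest discussion of why the counting machinery of the paper runs out of steam, and your diagnosis matches the paper's implicit one. Two small remarks. First, your downward induction does not actually reduce the full conjecture to the boundary cases: the top endpoint $\ell=\D(G)+1-n$ is equivalent to $\s(G)=2\D(G)-1$, which is itself the open Schmid--Zhuang conjecture for general rank-two-like $p$-groups; Corollary~\ref{cor:sbestimmen} settles only the slices $\D(G)=2\exp(G)-p^k$. So the induction cannot start in general. Second, in your analysis of $T$ with $\gamma=\D(G)-n-1$, note that for $\ell\ge 3$ the forbidden interval $\interv{2n-1-\D(G)+\ell}{n-1}$ is strictly smaller than the range $\interv{2n+1-\D(G)}{n-1}$ of the lengths $n-i$ appearing in the middle sum, so several of those terms survive in addition to the short counts $\reih{i}{T}$; this only reinforces your conclusion that the available congruences are insufficient.
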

This conjecture is currently known to hold when $\ell=1$ (see Theorem $5.2$ in \cite{luo}), and for every $\ell\in\interv{1}{\D(G)+1-n}$ under the stronger assumption that $\D(G)=2n-1$ (see Theorem $3.1.2$ in \cite{zhuang}).

\end{document}